\newtheorem{thm}{Theorem}[section]
\newtheorem{corollary}[thm]{Corollary}
\newtheorem{lem}[thm]{Lemma}
\newtheorem{fact}[thm]{Fact}
\theoremstyle{definition}
\newtheorem{defin}[thm]{Definition}
\newtheorem{rem}[thm]{Remark}
\numberwithin{equation}{section}
\begin{document}


\baselineskip=17pt



\title[On the generalisation of the Hahn-Jordan decomposition]{On the generalisation of the Hahn-Jordan decomposition for real c\`{a}dl\`{a}g functions}

\author[R. M. \L{}ochowski]{Rafa\L \ M. \L ochowski}
\address{Department of Mathematics and Mathematical Economics, Warsaw School
of Economics, Madali\'{n}skiego 6/8, 02-513 Warszawa, Poland}
\email{rlocho@sgh.waw.pl}
\address{and}
\address{African Institute for Mathematical Sciences, 6 Melrose Road,
Muizenberg 7945, South Africa}
\email{rafal@aims.ac.za }
\date{}

\begin{abstract}
For a real c\`{a}dl\`{a}g function $f$ and a positive constant $c$ we find
another c\`{a}dl\`{a}g function, which has the smallest total variation
possible among all functions uniformly approximating $f$ with accuracy $c/2.$ The solution is expressed with the truncated variation, upward truncated
variation and downward truncated variation introduced in \cite
{Lochowski:2008} and \cite{Lochowski:2011}. They are are always finite even if the total variation of $f$ is infinite, and they may be viewed as the generalisation of the Hahn-Jordan decomposition for real c\`{a}dl\`{a}g functions. We also present partial results for more general functions.  
\end{abstract}

\subjclass[2010]{Primary 26A45}

\keywords{c\`{a}dl\`{a}g function, total variation, truncated variation, uniform approximation, regulated function}

\maketitle

\section{Introduction}
The notion of a real-valued {\em signed measure} and its Hahn-Jordan decomposition plays fundamental role in the measure theory and the theory of integration. They are also related to the {\em upper, lower } and {\em total variations} of the signed measure \cite[Sect. IV.29]{Halmos:1950}. Generalisation to vector-valued measures is also possible. When the measurable space is the interval $[a;b],$ $-\infty < a < b < \infty,$ (with Borel $\sigma$-field of all measurable sets) instead of signed or vector-valued measures one may consider functions with finite total variation.

The total variation may be defined for any function $f:\left[a;b\right]\rightarrow E$ attaining values in a general metric space $E.$ Namely, when $\rho$ is the metric on $E$ we define the total variation of $f$ with the following formula
\[
TV(f,\left[a;b\right])=\sup_{n}\sup_{\pi_{n}}\sum_{i=1}^{n}\rho\left(f(t_{i}),f(t_{i-1})\right),
\]
where the second supremum is over all partitions $\pi_{n}=\left\{ a\leq t_{0}<t_{1}<...<t_{n}\leq b\right\} .$ 

In general, the total variation of $f$ may be (and in many important cases is) infinite. For example, almost all paths of a standard Brownian motion, which is widely used in stochastic modeling and optimisation, are continuous functions with infinite total variation on any interval $[0;t],$ $t>0.$ This fact was arguably the main reason for the introduction of the It\^{o} stochastic integral.  

However, after imposing some mild regularity conditions on $f$ we will easily find functions approximating $f$ with arbitrary accuracy and having finite total variation, even if the total variation of $f$ is infinite. Obviously, the better approximation is, the greater is the total variation of the approximating function. Let us fix $\ c>0.$ The natural question arises, what is the greatest lower bound for total variation of function $g:[a;b]\rightarrow E,$ uniformly approximating $f$ with accuracy $c/2>0,$ i.e. what is 
\[
\inf_{g\in B\left(f,c/2\right)}TV\left(g,\left[a;b\right]\right),
\]
where $B\left(f,d\right)$ denotes the ball 
\begin{align*}
B\left(f,d\right): & =\left\{ g:\left[a;b\right]\rightarrow E: \sup_{t\in[a;b]}\rho\left(f\left(t\right),g\left(t\right)\right)\leq d\right\} .
\end{align*}
The immediate bound from below for $\inf_{g\in B\left(f,c/2\right)}TV\left(g,[a;b]\right)$
reads as 
\begin{equation}
\inf_{g\in B\left(f,c/2\right)}TV\left(g,[a;b]\right)\geq\sup_{n}\sup_{\pi_{n}}\sum_{i=1}^{n}\max\left\{ \rho\left(f(t_{i}),f(t_{i-1})\right)-c,0\right\} \label{eq:lower_bound}
\end{equation}
and follows directly from the triangle inequality 
\begin{align*}
\rho\left(g(t_{i}),g(t_{i-1})\right) & \geq\rho\left(f(t_{i}),f(t_{i-1})\right)-\rho\left(f(t_{i}),g(t_{i})\right)-\rho\left(f(t_{i-1}),g(t_{i-1})\right)\\
 & \geq\rho\left(f(t_{i}),f(t_{i-1})\right)-c.
\end{align*}
We will call the quantity on the right hand side of (\ref{eq:lower_bound}), i.e.
\[
\sup_{n}\sup_{\pi_{n}}\sum_{i=1}^{n}\max\left\{ \rho\left(f(t_{i}),f(t_{i-1})\right)-c,0\right\} 
\]
\emph{truncated variation of the function $f$ at the level $c$} and denote it by $TV^{c}\left(f,[a;b]\right);$ it was first introduced in \cite{Lochowski:2008}.

The just obtained lower bound for $\inf_{g\in B\left(f,c/2\right)}TV\left(g,[a;b]\right)$ may also be infinite but from inequality (\ref{eq:lower_bound}) it follows that it is finite for any $c>0$ iff the function $f$ is an uniform limit of finite variation functions.  We prove this fact and identify the family of such functions in Section \ref{Preliminaria} (Fact \ref{regulated}). 

The family of real c\`{a}dl\`{a}g functions, i.e. right-continuous functions with left limits, will be of our special interest since c\`{a}dl\`{a}g functions with finite total variations correspond naturally to finite signed measures on the interval $(a;b].$
Moreover, in this paper we will show that for c\`{a}dl\`{a}g $f,$ $E=\mathbb{R}$ with the standard Euclidean metric $\rho\left(x,y\right)=\left|x-y\right|$ and any $c>0$
we have in fact equality, i.e. 
 \begin{equation}
\inf_{\left\Vert g-f\right\Vert _{\infty}\leq c/2}TV\left(g,[a;b]\right)=TV^{c}\left(f,[a;b]\right),\label{eq:tvequality}
\end{equation}
where $g:[a;b]\rightarrow \mathbb{R},$ $\left\Vert g-f\right\Vert _{\infty}=\sup_{t\in[a;b]}\left|g\left(t\right)-f\left(t\right)\right|.$
Morever, there exist such a c\`{a}dl\`{a}g function $f^{c}:[a;b]\rightarrow\mathbb{R}$
that 
\[
\left\Vert f^{c}-f\right\Vert _{\infty}\leq c/2\mbox{ and \ensuremath{TV\left(f^{c},[a;b]\right)=TV^{c}\left(f,[a;b]\right)}}.
\]
\begin{rem}
In general, the function $f^c$ is not unique, however, 
imposing stronger condition that  $\left\| f^{c} - f \right\| _{\infty }\leq c/2$ and for any $s\in \left( a;b\right]$ 
\begin{equation}
TV\left( f^{c},\left[ a;s\right] \right) =TV^{c}\left( f,\left[ a;s\right]
\right), \label{uniq}
\end{equation}
we will obtain that the function $f^{c}$ exists and is uniquely determined for any $c\leq \sup_{s,u\in \left[ a;b\right] }\left| f\left( s\right) -f\left( u\right) \right| $ (cf. Corollary \ref{cor1}). 
\end{rem}
\begin{rem}
The natural question appears if the truncated variation is attainable lower bound for $\inf_{g\in B\left(f,c/2\right)}TV\left(g,[a;b]\right)$ for functions with values in other metric spaces, but the answer to this problem is not known to the author. In \cite[Lemma 9]{Vladimirov:2000} it was proven that if $f$ is continuous and $E$ is a general, multidimensional (and complete metric) space then $\inf_{g\in B\left(f,c/2\right)}TV\left(g,[a;b]\right)$ is attained for some function $g_0,$ however, authors do not identify this quantity as the truncated variation. The proof of \cite[Lemma 9]{Vladimirov:2000} works for any c\`{a}dl\`{a}g function $f.$
\end{rem}
Since for $E=\mathbb{R}$ with $\rho\left(x,y\right)=\left|x-y\right|$ the total variation depends only on the increments of the function, in this case 
a more natural problem would be the following.
For a c\`{a}dl\`{a}g function $f:\left[ a;b\right] \rightarrow \mathbb{R}$
and $c>0$ find 
\begin{equation*} 
\inf \left\{ TV\left( f+h,\left[ a;b\right] \right) :\left\| h\right\|
_{osc}\leq c\right\} ,
\end{equation*}
where for $h:\left[ a;b\right] \rightarrow \mathbb{R},$ $\left\| h\right\| _{osc}:=\sup_{s,u\in \left[ a;b\right] }\left|
h\left( s\right) -h\left( u\right) \right| .$ Note that $\left\| .\right\|
_{osc}$ is a norm on the classes of bounded functions which differ by a
constant.

Solution to this problem is the same as the solution to the preceding
problem, i.e. 
\begin{equation}
\inf \left\{ TV\left( f+h,\left[ a;b\right] \right) :\left\| h\right\|
_{osc}\leq c\right\} =TV^{c}\left( f,\left[ a;b\right] \right) 
\label{tvceq}
\end{equation}
and one of the optimal representatives of the class of functions for which equality (\ref{tvceq}) is attained is $h^c=f^c - f$. To this class also belongs some $h^{0,c},$ such that $h^{0,c}\left( a\right) =0.$ 
We will prove that $f^{0,c}=f+h^{0,c} - f(a)$ is a c\`{a}dl\`{a}g function with
possible jumps only in the points where the function $f$ has jumps and that it
may be represented in the following form 
\begin{equation}
f^{0,c}\left( s\right) = UTV^{c}\left( f;\left[ a;s\right]
\right) -DTV^{c}\left( f;\left[ a;s\right] \right), \label{utv-dtv}
\end{equation}
where
\begin{equation}
UTV^{c}\left( f,\left[ a;s\right] \right) :=\sup_{n}\sup_{a\leq
t_{0}<t_{1}<...<t_{n}\leq s}\sum_{i=1}^{n}\max \left\{ f\left( t_{i}\right)
-f\left( t_{i-1}\right) -c,0\right\} ,  \label{utv:def}
\end{equation}
\begin{equation}
DTV^{c}\left( f,\left[ a;s\right] \right) :=\sup_{n}\sup_{a\leq
t_{0}<t_{1}<...<t_{n}\leq s}\sum_{i=1}^{n}\max \left\{ f\left(
t_{i-1}\right) -f\left( t_{i}\right) -c,0\right\}.  \label{dtv:def}
\end{equation}
The functionals $UTV^{c}\left( f,\left[ a;s\right] \right)$ and $DTV^{c}\left( f,\left[ a;s\right] \right)$ are non-decreasing functions of $s$ and are called {\em upward} and {\em downward truncated variations} of the function $f$ of order $c$ on the interval $[a;s]$ respectively. They were first introduced in \cite{Lochowski:2011} with a bit different formulae, equivalent with (\ref{utv:def}) and (\ref{dtv:def}).  

Finally, for $s \in (a;b]$ we will show the following equality 
\begin{equation}
TV\left(f^{0,c},\left[ a;s\right] \right) = TV^{c}\left( f,\left[ a;s\right] \right) =UTV^{c}\left( f,\left[ a;s\right]
\right) +DTV^{c}\left( f,\left[ a;s\right] \right).  \label{sumutvdtv}
\end{equation}
The equalities (\ref{utv-dtv}) and (\ref{sumutvdtv}) give the Hahn-Jordan decomposition of the finite signed measure, induced by the function $f^{0,c}$ (or by the function $f^c$). This measure assigns to any interval $\left(a_1,b_1 \right] \subset (a;b]$ the number
\[
\mu \left(a_1,b_1 \right] = f^{0,c}\left(b_1\right) - f^{0,c}\left(a_1\right)
\]
and we have 
\[
\mu \left(a_1,b_1 \right] =  \mu_+ \left(a_1,b_1 \right] - \mu_- \left(a_1,b_1 \right],
\]
where 
\[
\mu_+ \left(a_1,b_1 \right] = UTV^{c}\left( f,\left[ a;b_1\right]
\right) - UTV^{c}\left( f,\left[ a;a_1\right]
\right), 
\]
\[
\mu_- \left(a_1,b_1 \right] = DTV^{c}\left( f,\left[ a;b_1\right]
\right) - DTV^{c}\left( f,\left[ a;a_1\right]
\right).
\]
However, since $c>0$ is arbitrary, the equalities (\ref{utv-dtv}) and (\ref{sumutvdtv}) also may be viewed as the generalisation of the Hahn-Jordan decomposition for any real c\`{a}dl\`{a}g function $f$.

\begin{rem}
The truncated variation and its decomposition into the sum of upward and downward truncated variations appeared naturally when the uniform approximation of the c\`{a}dl\`{a}g function $f$ with finite variation functions was considered. 
The truncated variation is obtained by the composition of increments of $f$ with a convex function $\varphi(\cdot)=\left(\left| \cdot \right| -c \right)_+.$ 
Naturally, for any Young function (convex, non-decreasing, non-constant and vanishing at $0$) $\varphi:[0;+\infty)\rightarrow \mathbb{R}$ the notion of $\varphi$-variation defined as 
\[
TV^{\varphi}\left(f, [a;b] \right):= \sup_n \sup_{a\leq t_0 < t_1 < \ldots < t_n \leq b} \sum_{i=1}^{n} \varphi\left(\left|f(t_i) - f(t_{i-1})\right| \right)
\]
is of meaning. More on $\varphi$-variation may be found in \cite[Chapt. 3]{Norvaisa:2011}. The authors of \cite{Norvaisa:2011} consider only the case when $\varphi$ is strictly increasing, since for such $\varphi,$ corresponding  $\varphi$-variation leads to interesting estimates for integrals (generalisations of the Love-Young inequality). 

However, for any Young function $\varphi:[0;+\infty)\rightarrow \mathbb{R}$ the functional 
\[
\Vert f \Vert_{(\varphi)}:= \inf\left\{C>0 : TV^{\varphi}\left(f/C, [a;b] \right) \leq 1 \right\}
\]
is a seminorm on the space of such functions $f:[a;b] \rightarrow \mathbb{R}$ that $TV^{\varphi}\left(f/C, [a;b] \right) < +\infty $ for some $C>0$ (cf. \cite[Chapt. 3, proof of Theorem 3.7]{Norvaisa:2011}). $\Vert \cdot \Vert_{(\varphi)}$ is also a norm on the space of classes of abstraction of such functions, differing by a constant. For two Young functions $\varphi$ and $\psi,$ $\Vert \cdot \Vert_{(\varphi)}$ and $\Vert \cdot \Vert_{(\psi)}$ are equivalent when the ratio of the right-continuous inverse functions $\varphi^{-1}/\psi^{-1}$ is separated from $0$ and from $+\infty.$ Let us notice however, that not for every Young function $\varphi$ the corresponding $\varphi$-variation may be decomposed into the sum of upward and downward $\varphi$-variation. To see this consider the example. Let $\varphi$ be such that $\varphi(0)= \varphi(1) = 0,$ $\varphi(2)= 1,$ $\varphi(3)= 2$ and $\varphi(4)= 6;$ let $f$ be increasing on the interval $[0;1],$ decreasing on the interval $[1;2],$ and increasing on the interval $[2;3]$ with $f\left([0;1] \right)=[0;3],$ $f\left([1;2] \right)=[1;3]$ and $f\left([2;3] \right)=[1;4].$  Defining  
\[
UTV^{\varphi}\left(f, [a;b] \right) := \sup_n \sup_{a\leq t_0 < t_1 < \ldots < t_n \leq b} \sum_{i=1}^{n} \varphi\left(\left(f(t_i) - f(t_{i-1})\right)_{+}\right),
\] 
\[
DTV^{\varphi}\left(f, [a;b] \right) := \sup_n \sup_{a\leq t_0 < t_1 < \ldots < t_n \leq b} \sum_{i=1}^{n} \varphi\left(\left(f(t_i) - f(t_{i-1})\right)_{-}\right),
\] 
we have $TV^{\varphi}\left(f, [0;3] \right) = 6,$ $UTV^{\varphi}\left(f, [0;3] \right) = 6,$ and $DTV^{\varphi}\left(f, [0;3] \right) = 1,$ thus 
\[TV^{\varphi}\left(f, [0;3] \right) < UTV^{\varphi}\left(f, [0;3] \right) + DTV^{\varphi}\left(f, [0;3] \right).\] 

These and other properties of $TV^{\varphi}$ for general Young function $\varphi$ will be the subject of further investigation.

\end{rem}

\begin{rem}
Since we deal with c\`{a}dl\`{a}g functions, a more natural setting of the first problem would be the investigation of 
\begin{equation*}
\inf \left\{ TV\left( g,\left[ a;b\right] \right) : g \text{ - c\`{a}dl\`{a}g}, d_{D}(g,f)\leq c/2\right\}, 
\end{equation*}
where $d_{D}$ denotes the Skorohod metric (cf. \cite[Chapt. 3]{Billingsley}). However, the total variation does not depend on the (continuous and strictly increasing) transformations of the argument and for $E = \mathbb{R}$ with $\rho\left(x,y\right)=\left|x-y\right|$ the function $f^c$ minimizing $ TV\left( g,\left[ a;b\right] \right)$ appears to be a c\`{a}dl\`{a}g one, hence solutions of both problems coincide in this case.
\end{rem}

Let us comment on the organisation of the paper. In the next section we deal with functions attaining values in general metric spaces and prove Fact \ref{regulated}. In the third section we deal with real c\`{a}dl\`{a}g functions -  introduce some necessary definitions and notation, and present the construction of the functions $f^{c}$ and $f^{0,c}$ of the first and the second problem. In the fourth section we establish the connection between $f^{0,c}$ and truncated variation, upward truncated variation and downward truncated variation. In the last section we summarise some other general properties of (upward, downward) truncated variation, e.g. we will show that for any real c\`{a}dl\`{a}g function $f,$ $TV^{c}\left( f,\left[ a;b\right] \right) $ is a continuous, convex and decreasing function of the parameter $c>0.$ 

\section{Truncated variation of functions attaining values in metric spaces}\label{Preliminaria}

In this section we consider families of functions $f:\left[a;b\right]\rightarrow E,$ with finite truncated  variation for any $c>0,$ even if their total variation appears to be infinite. We start with 
\begin{defin}
Let $-\infty < a <b <+\infty$ and $f:\left[a;b\right]\rightarrow E.$ The function $f$ is called {\em regulated function} if for any $s \in (a;b)$ it has left and right limits, $f(s-),$ $f(s+),$ and limits $f(a+)$ and $f(b-)$ exist.
\end{defin}
Each regulated function has at most countable number of discontinuities (it follows easily from \cite[Chapt. 2, Corollary 2.2]{Norvaisa:2011}), but the possession of this property is not sufficient for a function to be a regulated one. 

We have 
\begin{fact}\label{regulated} Let $E$ be a complete metric space, $-\infty < a <b <+\infty$ and $f:\left[a;b\right]\rightarrow E.$ The following properties are equivalent
\begin{enumerate}[(a)]
\item $f$ is regulated;
\item $f$ is an uniform limit of finite variation functions;
\item for any $c>0,$ $TV^c\left(f,[a;b]\right) < + \infty.$
\end{enumerate}
\end{fact}
\begin{proof}
To prove (a)$\Rightarrow$(b) it is enough to notice that by \cite[Chapt. 2, Theorem 2.1]{Norvaisa:2011}) $f$ is an uniform limit of step functions, which have finite total variation (the assumption of \cite[Chapt. 2, Theorem 2.1]{Norvaisa:2011} that $E$ is a Banach space may be weakened and the proof follows when $E$ is a complete metric space). To prove (b)$\Rightarrow$(a) it is enough to notice that condition (b) of \cite[Chapt. 2, Theorem 2.1]{Norvaisa:2011}) holds for any function which is an uniform limit of finite variation functions. 

The implication (b)$\Rightarrow$(c) follows immediately from the inequality (\ref{eq:lower_bound}) and to prove (c)$\Rightarrow$(b) it is enough to notice that every function satisfying (c) satisfies also condition (b) of \cite[Chapt. 2, Theorem 2.1]{Norvaisa:2011}).
\end{proof}
\begin{rem}
When $E$ is not a complete metric space then the families of functions satisfying conditions (b) and (c) of Fact \ref{regulated} are still equal and contain the family of regulated functions (the implications (a)$\Rightarrow$(b) and (b)$\Rightarrow$(c) in the proof of \cite[Chapt. 2, Theorem 2.1]{Norvaisa:2011} hold) but they may be strictly greater. To see this it is enough to see that the function $f:[0;2]\rightarrow [0;1)$ such that $f(x)=x 1_{x<1}$ is not regulated for $E=[0;1)$ with standard Euclidean metric, but it has finite total variation. 
\end{rem}

\begin{rem}
From (\ref{eq:tvequality}) we may derive some upper bound for 
\[
\inf_{g\in B\left(f,c/2\right)}TV\left(g,\left[a;b\right]\right)
\]
when $f$ is c\`{a}dl\`{a}g and $E=\mathbb{R}^{N}$ with $\rho$ induced by the $L^{1}$ norm. Namely, for $f\left(t\right)=\left(f_{1}\left(t\right),\ldots,f_{N}\left(t\right)\right)\in\mathbb{R}^{N},$
$\left\Vert f\left(t\right)\right\Vert _{1}:=\left|f_{1}\left(t\right)\right|+\ldots+\left|f_{N}\left(t\right)\right|$ and $\rho(f(t),g(t)) := \left\Vert f(t) - g(t) \right\Vert _{1},$
we have 
\begin{align*}
\inf_{g\in B\left(f,c/2\right)}TV\left(g,\left[a;b\right]\right) & \leq\inf_{c_{1},\ldots,c_{N}>0,c_{1}+\ldots+c_{N}=c}\sum_{i=1}^{N}\inf_{g_{i}\in B\left(f_{i},c_{i}/2\right)}TV\left(g_{i},\left[a;b\right]\right)\\
& =\inf_{c_{1},\ldots,c_{N}>0,c_{1}+\ldots+c_{N}=c}\sum_{i=1}^{N}TV^{c_{i}}\left(f_{i},\left[a;b\right]\right).
\end{align*}
Some other upper bound for $\inf_{g\in B\left(f,c/2\right)}TV\left(g,[a;b]\right)$ was given by \cite[Theorem 10 and Theorem 11]{Vladimirov:2000}. 
\end{rem}

\section{Solution of the first and the second problem for real c\`{a}dl\`{a}g functions} \label{construction}

\subsection{Definitions and notation}

In this subsection we introduce definitions and notation which will be used
throughout the whole paper.

Let $f:\left[ a;b\right] \rightarrow \mathbb{R}$ be a c\`{a}dl\`{a}g function. For $c>0$ we define two stopping times 
\begin{gather*}
T_{D}^{c}f=\inf \left\{ s\geq a:\sup_{t\in \left[ a;s\right] }f\left(
t\right) -f\left( s\right) \geq c\right\} , \\
T_{U}^{c}f=\inf \left\{ s\geq a:f\left( s\right) -\inf_{t\in \left[ a;s%
\right] }f\left( t\right) \geq c\right\} .
\end{gather*}

Assume that $T_{D}^{c}f\geq T_{U}^{c}f$ i.e. the first upward jump of
function $f$ of size $c$ appears before the first downward jump of the same
size $c$ or both times are infinite (there is no upward neither downward jump of
size $c$). Note that in the case $T_{D}^{c}f<T_{U}^{c}f$ we may simply
consider function $-f.$ Now we define sequences $\left( T_{U,k}^{c}\right)
_{k=0}^{\infty },\left( T_{D,k}^{c}\right) _{k=-1}^{\infty },$ in the
following way: $T_{D,-1}^{c}=a,$ $T_{U,0}^{c}=T_{U}^{c}f$ and for $%
k=0,1,2,...$%
\begin{gather*}
T_{D,k}^{c}=\left\{ 
\begin{array}{lr}
\inf \left\{ s\in \left[ T_{U,k}^{c};b\right] :\sup_{t\in \left[
T_{U,k}^{c};s\right] }f\left( t\right) -f\left( s\right) \geq c\right\} & 
\text{ if }T_{U,k}^{c}<b, \\ 
\infty & \text{ if }T_{U,k}^{c}\geq b,
\end{array}
\right. \\
T_{U,k+1}^{c}=\left\{ 
\begin{array}{lr}
\inf \left\{ s\in \left[ T_{D,k}^{c};b\right] :f\left( s\right) -\inf_{t\in 
\left[ T_{D,k}^{c};s\right] }f\left( t\right) \geq c\right\} & \text{ if }%
T_{D,k}^{c}<b, \\ 
\infty & \text{ if }T_{D,k}^{c}\geq b.
\end{array}
\right. 
\end{gather*}
\begin{rem}
\label{finK} 
Times $T_{U,k}^{c}$ and $T_{D,k}^{c}$ may be seen as the consecutive times of "switching" from the two disjoint borders $\left\{(t, f(t)- c/2):t \in[a;b]\right\},$ and $\left\{(t, f(t)+ c/2):t \in[a;b]\right\}$ of the graph of a lazy function, which changes its value only if it is necessary for the relation $\left\| f-f^{c}\right\| _{\infty }\leq c/2$ to hold. 

Note that there exists such $K<\infty $ that $T_{U,K}^{c}=\infty $ or $T_{D,K}^{c}=\infty .$ Otherwise we would obtain two
infinite sequences $\left( s_{k}\right) _{k=1}^{\infty },\left( S_{k}\right)
_{k=1}^{\infty }$ such that $a\leq s_{1}<S_{1}<s_{2}<S_{2}<...$ $\leq b$ and 
$f\left( S_{k}\right) -f\left( s_{k}\right) \geq c/2.$ But this is
a contradiction, since $f$ is a c\`{a}dl\`{a}g function and $\left( f\left(
s_{k}\right) \right) _{k=1}^{\infty },\left( f\left( S_{k}\right) \right)
_{k=1}^{\infty }$ have a common limit.
\end{rem}
Now let us define for such $k$ that $T_{D,k-1}^{c}<\infty $
and $T_{U,k}^{c}<\infty $ two sequences of non-decreasing functions $m_{k}^{c}:\left[
T_{D,k-1}^{c};T_{U,k}^{c}\right) \cap \lbrack a;b]\rightarrow \mathbb{R}$
and $M_{k}^{c}:\left[ T_{U,k}^{c};T_{D,k}^{c}\right) \cap \lbrack
a;b]\rightarrow \mathbb{R}$ with the formulae 
\begin{equation*}
m_{k}^{c}\left( s\right) =\inf_{t\in \left[ T_{D,k-1}^{c};s\right] }f\left(
t\right) ,^{{}}M_{k}^{c}\left( s\right) =\sup_{t\in \left[ T_{U,k}^{c};s%
\right] }f\left( t\right) .
\end{equation*}

Next we define two finite sequences of real numbers $\left( m_{k}^{c}\right) 
$ and $\left( M_{k}^{c}\right) ,$ for such $k$ that $T_{D,k-1}^{c}<\infty $
and $T_{U,k}^{c}<\infty $\ respectively, with the formulae 
\begin{eqnarray*}
m_{k}^{c} &=&m_{k}^{c}\left( T_{U,k}^{c}-\right) =\inf_{t\in \left[
T_{D,k-1}^{c};T_{U,k}^{c}\right) \cap [a;b] }f\left( t\right) , \\
M_{k}^{c} &=&M_{k}^{c}\left( T_{D,k}^{c}-\right) =\sup_{t\in \left[
T_{U,k}^{c};T_{D,k}^{c}\right)\cap [a;b] }f\left( t\right) .
\end{eqnarray*}

\subsection{Solution of the first problem}

\label{sol1problem}

In this subsection we will solve the following problem: \emph{what
is the smallest possible\ (or infimum of) total variation of functions from
the ball $\left\{ g:\left\| f-g\right\| _{\infty }\leq c/2\right\}
?$ }

In order to solve this problem we start with results concerning
c\`{a}dl\`{a}g functions. We apply the definitions of the previous
subsection to the function $f$ and assume that $T_{D}^{c}f\geq T_{U}^{c}f.$
Define the function $f^{c}:\left[ a;b\right] \rightarrow \mathbb{R}$ with
the formulae 
\begin{equation*}
f^{c}\left( s\right) =\left\{ 
\begin{array}{lr}
m_{0}^{c}+c/2 & \text{ if }s\in \left[ a;T_{U,0}^{c}\right) ; \\ 
M_{k}^{c}\left( s\right) -c/2 & \text{ if }s\in \left[
T_{U,k}^{c};T_{D,k}^{c}\right) ,k=0,1,2,...; \\ 
m_{k+1}^{c}\left( s\right) +c/2 & \text{ if }s\in \left[
T_{D,k}^{c};T_{U,k+1}^{c}\right) ,k=0,1,2,....
\end{array}
\right.
\end{equation*}

\begin{rem}
Note that due to Remark \ref{finK}, $b$ belongs to one of the intervals $%
\left[ T_{U,k}^{c};T_{D,k}^{c}\right) $ or $\left[ T_{D,k}^{c};T_{U,k+1}^{c}%
\right) $ for some $k=0,1,2,...$ and the function $f^{c}$ is defined for
every $s\in \lbrack a;b].$
\end{rem}

\begin{rem}
One may think about the function $f^{c}$ as of the laziest function
possible, which changes its value only if it is necessary for the relation $%
\left\| f-f^{c}\right\| _{\infty }\leq c/2$ to hold. Its starting value is such that is stays in the interval $[f(t)-c/2;f(t)+c/2]$ for the longest time possible.
\end{rem}

\begin{rem}
In the case $T_{D}^{c}f<T_{U}^{c}f$ we may apply the definitions of the
previous subsection to the function $-f$ and simply define $f^{c}=-(-f)^{c}.$
Thus we will assume that the mapping $f\mapsto f^{c}$ is defined for any 
c\`{a}dl\`{a}g function. Similarly, in all the proofs of this section we
will assume $T_{D}^{c}f\leq T_{U}^{c}f,$ but all results of this section
(i.e. Lemma \ref{lem1}, Theorem \ref{thm1}, Corollary \ref{cor1}, Lemma \ref
{lem2}, Theorem \ref{thm2}, Corollary \ref{cor2} and Theorem \ref{THMM})
apply to any c\`{a}dl\`{a}g function $f.$ Obvious modifications are only
necessary in the definition of the stopping times $T_{U,k}^{c}$ and $%
T_{D,k}^{c}$ and then the functions $f_{U}^{c}$ and $f_{D}^{c}$ of Theorem 
\ref{thm1}.
\end{rem}

We have the following

\begin{lem}
\label{lem1} The function $f^{c}$ uniformly approximates the function $f$ with
accuracy $c/2$ and has finite total variation. Moreover $f^{c}$ is
a c\`{a}dl\`{a}g function and every point of the discontinuity of $f^{c}$ is also a point of discontinuity of the function $f.$
\end{lem}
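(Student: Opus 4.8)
The plan is to verify the four assertions of Lemma~\ref{lem1} in turn, working on each of the defining pieces of $f^c$ separately and then checking compatibility at the junction points $T_{U,k}^c$ and $T_{D,k}^c$. Throughout I assume $T_D^cf\ge T_U^cf$, as permitted by the preceding remark.

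First I would establish the uniform bound $\|f^c-f\|_\infty\le c/2$. On an interval $[T_{U,k}^c;T_{D,k}^c)$ we have $f^c(s)=M_k^c(s)-c/2$ where $M_k^c(s)=\sup_{t\in[T_{U,k}^c;s]}f(t)$; hence $f^c(s)-f(s)=M_k^c(s)-f(s)-c/2\ge -c/2$ since $M_k^c(s)\ge f(s)$, and $f^c(s)-f(s)\le M_k^c(s)-c/2-\inf_{t\in[T_{U,k}^c;s]}f(t)$ — the point is that $M_k^c(s)-\inf_{t\in[T_{U,k}^c;s]}f(t)<c$ for $s<T_{D,k}^c$ by the very definition of $T_{D,k}^c$ as the first time this oscillation-type quantity reaches $c$ (one has to be slightly careful because $T_{D,k}^c$ is an infimum, so at the endpoint itself the gap may equal $c$, but on the half-open interval it is strictly below $c$; for $s=T_{U,k}^c$ itself $M_k^c(s)=f(s)$ so the bound is immediate). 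The symmetric argument handles the intervals $[T_{D,k}^c;T_{U,k+1}^c)$ using $m_{k+1}^c(s)=\inf_{t\in[T_{D,k}^c;s]}f(t)$ and the definition of $T_{U,k+1}^c$, and the initial interval $[a;T_{U,0}^c)$ is the special case $k=0$ of the ``$m$''-type interval with $T_{D,-1}^c=a$. So $\|f^c-f\|_\infty\le c/2$ on each piece, hence everywhere.

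Next, finite total variation: by Remark~\ref{finK} there are only finitely many intervals, and on each one $f^c$ is $\pm c/2$ plus a monotone function ($M_k^c$ is non-decreasing, $m_{k+1}^c$ is non-increasing — wait, $m_{k+1}^c(s)=\inf_{t\in[T_{D,k}^c;s]}f(t)$ is non-increasing in $s$), so the total variation of $f^c$ restricted to each interval is finite (it equals the total oscillation of the monotone piece, which is bounded by $\sup f-\inf f$ on that interval, itself finite since $f$ is c\`adl\`ag hence bounded). Adding the finitely many pieces plus the finitely many jumps at the junction points gives $TV(f^c,[a;b])<\infty$. For the c\`adl\`ag property: within each half-open interval $f^c$ is a composition of $f$ with $\sup$/$\inf$ over a growing interval, and running suprema/infima of a c\`adl\`ag function are again c\`adl\`ag, so $f^c$ is right-continuous with left limits on the interior of each piece; the only thing to check is right-continuity at the left endpoints $T_{U,k}^c$ and $T_{D,k}^c$, which requires showing the value assigned there by the formula agrees with the right limit of the running extremum started at that same point — this is immediate since $M_k^c(T_{U,k}^c)=f(T_{U,k}^c)$ and $f$ is right-continuous, and likewise for $m_{k+1}^c$. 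Finally, for the statement that every discontinuity of $f^c$ is a discontinuity of $f$: inside a ``$M$''-piece, $f^c$ jumps at $s$ only if the running supremum $M_k^c$ jumps at $s$, which forces $f(s)>M_k^c(s-)=\sup_{t<s}f(t)\ge f(s-)$, so $f$ jumps at $s$ too; symmetrically on ``$m$''-pieces; and at a junction point $T_{D,k}^c$ (say) one must argue that if $f$ is continuous there then $f^c$ is continuous there — the left value is $M_k^c(T_{D,k}^c-)-c/2$ and the right value is $m_{k+1}^c(T_{D,k}^c)+c/2=f(T_{D,k}^c)+c/2$, and continuity of $f$ at $T_{D,k}^c$ combined with the definition of $T_{D,k}^c$ (the gap $M_k^c-f$ reaches exactly $c$ there, not overshooting, because $f$ has no jump) gives $M_k^c(T_{D,k}^c-)=M_k^c(T_{D,k}^c)=f(T_{D,k}^c)+c$, so the two one-sided values coincide.

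The step I expect to be the main obstacle is the careful bookkeeping at the junction points — both the right-continuity check and the ``no spurious discontinuity'' check — because the stopping times are defined as infima and one must rule out the degenerate possibility that the relevant running extremum overshoots the level $c$ exactly at $T_{D,k}^c$ or $T_{U,k+1}^c$ without $f$ itself having a jump there; handling the boundary behavior of $\sup_{t\in[a;s]}f(t)-f(s)$ as an infimum is defined (whether it is attained, whether it is left- or right-continuous) is where the genuine content lies, and one should probably isolate a small auxiliary claim about running suprema of c\`adl\`ag functions before assembling the proof.
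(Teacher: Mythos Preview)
Your overall approach mirrors the paper's: piecewise verification on $[a;T_{U,0}^c)$, $[T_{U,k}^c;T_{D,k}^c)$, $[T_{D,k}^c;T_{U,k+1}^c)$, followed by junction-point checks. The finite-variation, c\`adl\`ag, and discontinuity arguments are essentially those of the paper.

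However, your uniform-approximation argument on $[T_{U,k}^c;T_{D,k}^c)$ contains a genuine error. You bound
\[
f^c(s)-f(s)\le M_k^c(s)-\tfrac{c}{2}-\inf_{t\in[T_{U,k}^c;s]}f(t)
\]
and then assert that $M_k^c(s)-\inf_{t\in[T_{U,k}^c;s]}f(t)<c$ ``by the very definition of $T_{D,k}^c$ as the first time this oscillation-type quantity reaches $c$''. That is \emph{not} the definition: $T_{D,k}^c$ is the first time $M_k^c(s)-f(s)\ge c$ (current value of $f$, not its running infimum), and the oscillation can in fact exceed $c$ strictly before $T_{D,k}^c$. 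For instance, set $f(T_{U,k}^c)=0$, let $f$ decrease continuously to $-c+\varepsilon$, and then increase continuously to $2\varepsilon$; one checks $M_k^c(t)-f(t)\le c-\varepsilon$ throughout, so $T_{D,k}^c$ has not been reached, yet at the final point the oscillation equals $2\varepsilon-(-c+\varepsilon)=c+\varepsilon>c$. The fix is to drop the detour through the infimum entirely: since
\[
f^c(s)-f(s)=M_k^c(s)-f(s)-\tfrac{c}{2}
\]
and $M_k^c(s)-f(s)\in[0,c)$ \emph{is} precisely what the definition of $T_{D,k}^c$ gives for $s<T_{D,k}^c$, you obtain $f^c(s)-f(s)\in[-c/2,c/2)$ directly. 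This is exactly what the paper does.

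A smaller slip: the initial interval $[a;T_{U,0}^c)$ is not the $k=0$ instance of the ``$m$''-pieces. On that interval $f^c$ is the \emph{constant} $m_0^c+c/2$ with $m_0^c=\inf_{t\in[a;T_{U,0}^c)}f(t)$, not the running quantity $m_0^c(s)+c/2$; establishing $f(s)-m_0^c<c$ there requires both $s<T_U^cf$ \emph{and} $s<T_D^cf$, not just the former.
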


\begin{proof}
Let us fix $s\in \left[ a;b\right].$ We have three possibilities.
\begin{itemize}
\item  $s\in \left[ a;T_{U,0}^{c}\right) .$ In this case, since $a\leq
s<T_{U}^{c}f \leq T_{D}^{c}f,$ 
\begin{equation*}
f\left( s\right) -f^{c}\left( s\right) =f\left( s\right) -\inf_{t\in \left[
a;T_{U,0}^{c}\right) }f\left( t\right) -c/2\in \left[ -c/2;c/2\right) .
\end{equation*}

\item  $s\in \left[ T_{U,k}^{c};T_{D,k}^{c}\right) ,$ for some $k=0,1,2,...
$ In this case $M_{k}^{c}\left( s\right) -f\left( s\right) $ belongs to the
interval $\left[ 0;c\right) ,$ hence 
\begin{equation*}
f\left( s\right) -f^{c}\left( s\right) =f\left( s\right) -M_{k}^{c}\left(
s\right) +c/2\in \left( -c/2;c/2\right] .
\end{equation*}

\item  $s\in \left[ T_{D,k}^{c};T_{U,k+1}^{c}\right) $ for some $%
k=0,1,2,...$ In this case $f\left( s\right) -m_{k+1}^{c}\left( s\right) $
belongs to the interval $\left[ 0,c\right) ,$ hence 
\begin{equation*}
f\left( s\right) -f^{c}\left( s\right) =f\left( s\right) -m_{k+1}^{c}\left(
s\right) -c/2 \in \left[ -c/2;c/2\right) .
\end{equation*}
\end{itemize}

The function $f^{c}$ has finite total variation since it is non-decreasing on
the intervals $\left[ T_{U,k}^{c};T_{D,k}^{c}\right) ,k=0,1,2,...$ and
non-increasing on the intervals $\left[ T_{D,k}^{c};T_{U,k+1}^{c}\right)
,k=0,1,2,...,$ and it has finite number of jumps between these
intervals.

For a similar reason, the function $f^{c}$ has left and right limits. To see that
it is right-continuous, let us fix $s\in \left[ a;b\right] $ and notice that
by definition of $f^{c},$ for $t \in \left( s; b \right]$ sufficiently close to $s,$ 
\begin{equation*}
f^{c}\left( t\right) =\inf_{u\in \left[ s;t\right] }f^c\left( u\right)  \text{
or }f^{c}\left( t\right) =\sup_{u\in \left[ s;t\right] }f^c\left( u\right),
\end{equation*}
and the assertion follows from the right-continuity of the function $f.$

A similar argument may be applied to prove that $f^{c}$ is continuous in every
point of continuity of $f$ except the points 
$T_{U,0}^{c},T_{D,0}^{c},T_{U,1}^{c},T_{D,1}^{c},...;$ but if $s=T_{D,i}^{c}$
and $f$ is continuous at the point $s$ then it means that $f\left(
T_{U,i}^{c}-\right) =f\left( T_{U,i}^{c}\right) =\inf_{t\in \left[
T_{D,i-1}^{c};T_{U,i}^{c}\right) }f\left( t\right) +c$ and 
\begin{equation*}
f^{c}\left( T_{U,i}^{c}-\right) =\inf_{t\in \left[ T_{D,i-1}^{c};T_{U,i}^{c}%
\right) }f\left( t\right) +c/2=f\left( T_{U,i}^{c}\right) - c/2 =f^{c}\left( T_{U,i}^{c}\right) .
\end{equation*}
A similar argument applies when $s=T_{D,i}^{c}.$

\end{proof}

Since $f^{c}$ is of finite total variation, we know that there exist such
two non-decreasing functions $f_{U}^{c}$ and $f_{D}^{c}:\left[ a;b\right]
\rightarrow \left[ 0;+\infty \right) $ that $f^{c}\left( t\right)
=f^{c}\left( a\right) +f_{U}^{c}\left( t\right) -f_{D}^{c}\left( t\right) .$

Let us examine the signs of the jumps of the function $f^{c}$ between intervals $\left[ T_{U,k}^{c};T_{D,k}^{c}\right) $ and $\left[
T_{D,k}^{c};T_{U,k+1}^{c}\right) .$ Due to c\`{a}dl\`{a}g property we have 
\begin{eqnarray*}
f^{c}\left( T_{U,k}^{c}\right) -f^{c}\left( T_{U,k}^{c}-\right)
&=&f^{c}\left( T_{U,k}^{c}\right) -m_{k}^{c}-c \\
&=&f\left( T_{U,k}^{c}\right) -\inf_{t\in \left[ T_{D,k-1}^{c};T_{U,k}^{c}%
\right) }f\left( t\right) -c\geq 0, \\
f^{c}\left( T_{D,k}^{c}\right) -f^{c}\left( T_{D,k}^{c}-\right)
&=&f^{c}\left( T_{D,k}^{c}\right) -M_{k}^{c}+2c \\
&=& f\left( T_{D,k}^{c}\right) -\sup_{t\in \left[ T_{U,k}^{c};T_{D,k}^{c}\right) }f\left(
t\right) +c\leq 0.
\end{eqnarray*}
Hence we may set $f_{U}^{c}\left( s\right) =f_{D}^{c}\left( s\right) =0$ for 
$s\in \left[ a;T_{U,0}^{c}\right) ,$ 
\begin{equation*}
f_{U}^{c}\left( s\right) =\left\{ 
\begin{array}{lr}
\sum_{i=0}^{k-1}\left\{ M_{i}^{c}-m_{i}^{c}-c\right\} +M_{k}^{c}\left(
s\right) -m_{k}^{c}-c & \text{ if }s\in \left[ T_{U,k}^{c};T_{D,k}^{c}%
\right) ; \\ 
\sum_{i=0}^{k}\left\{ M_{i}^{c}-m_{i}^{c}-c\right\} & \text{ if }s\in \left[
T_{D,k}^{c};T_{U,k+1}^{c}\right)
\end{array}
\right.
\end{equation*}
and 
\begin{equation*}
f_{D}^{c}\left( s\right) =\left\{ 
\begin{array}{lr}
\sum_{i=0}^{k-1}\left\{ M_{i}^{c}-m_{i+1}^{c}-c\right\} & \text{ if }s\in 
\left[ T_{U,k}^{c};T_{D,k}^{c}\right) ; \\ 
\sum_{i=0}^{k-1}\left\{ M_{i}^{c}-m_{i+1}^{c}-c\right\}
+M_{k}^{c}-m_{k+1}^{c}\left( s\right) -c & \text{ if }s\in \left[
T_{D,k}^{c};T_{U,k+1}^{c}\right) .
\end{array}
\right.
\end{equation*}

Now we will prove the following

\begin{thm}
\label{thm1} If $g:\left[ a;b\right] \rightarrow \mathbb{R}$ uniformly
approximates $f$ with accuracy $c/2,$ has finite total variation
and $g_{U},g_{D}:\left[ a;b\right] \rightarrow \left[ 0;+\infty \right) $
are such two non-decreasing functions that $g\left( t\right) =g\left(
a\right) +g_{U}\left( t\right) -g_{D}\left( t\right) ,t\in \left[ a;b\right]
,$ then for any $s\in \left[ a;b\right] $%
\begin{equation}
g_{U}\left( s\right) \geq f_{U}^{c}\left( s\right) \text{ and }g_{D}\left(
s\right) \geq f_{D}^{c}\left( s\right) .  \label{thm3:eq}
\end{equation}
\end{thm}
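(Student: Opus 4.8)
The plan is to reduce both inequalities in \eqref{thm3:eq} to the construction of one well-chosen partition. Since $g_{U},g_{D}\geq 0$ are non-decreasing and $g\left( r_{j}\right) -g\left( r_{j-1}\right) =\left[ g_{U}\left( r_{j}\right) -g_{U}\left( r_{j-1}\right) \right] -\left[ g_{D}\left( r_{j}\right) -g_{D}\left( r_{j-1}\right) \right]$, for every partition $a=r_{0}<r_{1}<\dots <r_{n}\leq s$ one has $g_{U}\left( r_{j}\right) -g_{U}\left( r_{j-1}\right) \geq \max \left\{ g\left( r_{j}\right) -g\left( r_{j-1}\right) ,0\right\}$ and $g_{D}\left( r_{j}\right) -g_{D}\left( r_{j-1}\right) \geq \max \left\{ g\left( r_{j-1}\right) -g\left( r_{j}\right) ,0\right\}$, so that, telescoping and using monotonicity,
\begin{equation*}
g_{U}\left( s\right) \geq \sum_{j=1}^{n}\max \left\{ g\left( r_{j}\right) -g\left( r_{j-1}\right) ,0\right\} \qquad \text{and}\qquad g_{D}\left( s\right) \geq \sum_{j=1}^{n}\max \left\{ g\left( r_{j-1}\right) -g\left( r_{j}\right) ,0\right\} .
\end{equation*}
Hence it suffices, for each fixed $s\in \left[ a;b\right]$ and each $\varepsilon >0$, to produce one partition with $r_{n}\leq s$ whose positive-increment sum is at least $f_{U}^{c}\left( s\right) -\varepsilon$ and whose negative-increment sum is at least $f_{D}^{c}\left( s\right) -\varepsilon$; letting $\varepsilon \downarrow 0$ then gives \eqref{thm3:eq}. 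The case $s\in \left[ a;T_{U,0}^{c}\right)$ is trivial, since then $f_{U}^{c}\left( s\right) =f_{D}^{c}\left( s\right) =0$.

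For the remaining $s$, which lies either in some $\left[ T_{U,k}^{c};T_{D,k}^{c}\right)$ or in some $\left[ T_{D,k}^{c};T_{U,k+1}^{c}\right)$, I would build the partition from approximate extremisers of $f$ on the alternating ``$\inf$'' and ``$\sup$'' intervals: pick $u_{i}\in \left[ T_{D,i-1}^{c};T_{U,i}^{c}\right)$ with $f\left( u_{i}\right) <m_{i}^{c}+\varepsilon$ and $v_{i}\in \left[ T_{U,i}^{c};T_{D,i}^{c}\right)$ with $f\left( v_{i}\right) >M_{i}^{c}-\varepsilon$ for the relevant indices, except that for the last, truncated, interval one takes $v_{k}\in \left[ T_{U,k}^{c};s\right]$ with $f\left( v_{k}\right) >M_{k}^{c}\left( s\right) -\varepsilon$ in the first case, and in addition $u_{k+1}\in \left[ T_{D,k}^{c};s\right]$ with $f\left( u_{k+1}\right) <m_{k+1}^{c}\left( s\right) +\varepsilon$ in the second. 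Because $T_{D,i-1}^{c}\leq u_{i}<T_{U,i}^{c}\leq v_{i}<T_{D,i}^{c}\leq u_{i+1}$, the points $a\leq u_{0}<v_{0}<u_{1}<v_{1}<\dots$ (ending at $v_{k}$ in the first case and at $u_{k+1}$ in the second) are strictly increasing and bounded by $s$, so they form a legitimate partition (dropping $a$ if $u_{0}=a$).

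Next I would estimate term by term. Using $\left\Vert g-f\right\Vert _{\infty }\leq c/2$ we get $g\left( v_{i}\right) -g\left( u_{i}\right) \geq f\left( v_{i}\right) -f\left( u_{i}\right) -c$ and $g\left( v_{i}\right) -g\left( u_{i+1}\right) \geq f\left( v_{i}\right) -f\left( u_{i+1}\right) -c$, so an ``upswing'' step $u_{i}\rightarrow v_{i}$ contributes at least $M_{i}^{c}-m_{i}^{c}-c-2\varepsilon$ to the positive-increment sum (at least $M_{k}^{c}\left( s\right) -m_{k}^{c}-c-2\varepsilon$ for the truncated one), a ``downswing'' step $v_{i}\rightarrow u_{i+1}$ contributes at least $M_{i}^{c}-m_{i+1}^{c}-c-2\varepsilon$ to the negative-increment sum (at least $M_{k}^{c}-m_{k+1}^{c}\left( s\right) -c-2\varepsilon$ for the truncated one), and every other step contributes at least $0$. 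Here I would invoke the jump computations recorded just before the statement, namely $f\left( T_{U,i}^{c}\right) -m_{i}^{c}-c\geq 0$ (so $M_{i}^{c}-m_{i}^{c}-c\geq 0$, as $M_{i}^{c}\geq f\left( T_{U,i}^{c}\right)$) and $M_{i}^{c}-f\left( T_{D,i}^{c}\right) -c\geq 0$ (so $M_{i}^{c}-m_{i+1}^{c}-c\geq 0$, as $m_{i+1}^{c}\leq f\left( T_{D,i}^{c}\right)$; and similarly with $m_{k+1}^{c}\left( s\right)$, $M_{k}^{c}\left( s\right)$ in the truncated terms), which ensures every $\max \left\{ \cdot ,0\right\}$ may be discarded. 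Summing and matching against the explicit formulas for $f_{U}^{c}\left( s\right)$ and $f_{D}^{c}\left( s\right)$ shows each sum is short by at most $2\left( k+1\right) \varepsilon$; letting $\varepsilon \downarrow 0$ finishes the proof, and the case $T_{D}^{c}f<T_{U}^{c}f$ is obtained by applying the above to $-f$.

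I expect the main difficulty to be organisational rather than conceptual: one must carry the partition and the index accounting correctly through the two possible locations of $s$ and through the first and last indices, while observing that the suprema and infima defining $m_{i}^{c}$ and $M_{i}^{c}$ are over half-open intervals and need not be attained --- which is exactly why the $\varepsilon$-slack is built into the extremisers from the start.
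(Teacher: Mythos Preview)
Your proposal is correct and follows essentially the same approach as the paper: both arguments exploit the alternating $\inf$/$\sup$ structure on the intervals $[T_{D,i-1}^{c};T_{U,i}^{c})$ and $[T_{U,i}^{c};T_{D,i}^{c})$, combine the monotonicity of $g_{U},g_{D}$ with the bound $\lVert g-f\rVert_{\infty}\le c/2$, and then telescope. The only technical difference is that the paper works directly with suprema and infima over those half-open intervals (using that $g_{D}$ is non-decreasing to pass from $g_{U}$ to $g_{U}-g_{D}=g-g(a)$), which lets it avoid your $\varepsilon$-approximate extremisers and the final limit; your pointwise version is equally valid and perhaps a touch more explicit.
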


\begin{proof}
Again, we consider three cases.

\begin{itemize}
\item  $s\in \left[ a;T_{U,0}^{c}\right) .$ In this case $g_{U}\left(
s\right) \geq 0=f_{U}^{c}\left( s\right) $ as well as $g_{D}\left( s\right)
\geq 0=f_{D}^{c}\left( s\right) $

\item  $s\in \left[ T_{U,k}^{c};T_{D,k}^{c}\right) ,$ for some $k=0,1,2,...
$ In this case, from the fact that $g$ uniformly approximates $f$ with
accuracy $c/2$ and from the fact that $g_{U},g_{D}$ are non-decreasing, for $i=0,1,2,...k-1$ we get 
\begin{align*}
& \sup_{s_{i}\in \left[ T_{U,i}^{c};T_{D,i}^{c}\right) }g_{U}\left(
s_{i}\right) -\inf_{s_{i}\in \left[ T_{D,i-1}^{c};T_{U,i}^{c}\right)
}g_{U}\left( s_{i}\right)  \\
& \geq \sup_{s_{i}\in \left[ T_{U,i}^{c};T_{D,i}^{c}\right) }\left(
g_{U}-g_{D}\right) \left( s_{i}\right) -\inf_{s_{i}\in \left[
T_{D,i-1}^{c};T_{U,i}^{c}\right) }\left( g_{U}-g_{D}\right) \left(
s_{i}\right)  \\
& =\sup_{s_{i}\in \left[ T_{U,i}^{c};T_{D,i}^{c}\right) }g\left(
s_{i}\right) -\inf_{s_{i}\in \left[ T_{D,i-1}^{c};T_{U,i}^{c}\right)
}g\left( s_{i}\right)  \\
& \geq \sup_{s_{i}\in \left[ T_{U,i}^{c};T_{D,i}^{c}\right) }\left\{
f\left( s_{i}\right) -c/2\right\} -\inf_{s_{i}\in \left[
T_{D,i-1}^{c};T_{U,i}^{c}\right) }\left\{ f\left( s_{i}\right) +c/2\right\} 
\\
& =M_{i}^{c}-m_{i}^{c}-c.
\end{align*}
Similarly 
\begin{align*}
& g_{U}\left( s\right) -\inf_{s_{k}\in \left[ T_{D,k-1}^{c};T_{U,k}^{c}%
\right) }g_{U}\left( s_{k}\right)  \\
& =\sup_{t\in \left[ T_{U,k}^{c};s\right] }g_{U}\left( t\right)
-\inf_{s_{k}\in \left[ T_{D,k-1}^{c};T_{U,k}^{c}\right) }g_{U}\left(
s_{k}\right)  \\
& \geq \sup_{t\in \left[ T_{U,k}^{c};s\right] }\left( g_{U}-g_{D}\right)
\left( t\right) -\inf_{s_{k}\in \left[ T_{D,k-1}^{c};T_{U,k}^{c}\right)
}\left( g_{U}-g_{D}\right) \left( s_{k}\right)  \\
& =\sup_{t\in \left[ T_{U,k}^{c};s\right] }g\left( t\right) -\inf_{s_{k}\in %
\left[ T_{D,k-1}^{c};T_{U,k}^{c}\right) }g\left( s_{k}\right)  \\
& \geq \sup_{t\in \left[ T_{U,k}^{c};s\right] }\left\{ f\left( t\right)
-c/2\right\} -\inf_{s_{k}\in \left[ T_{D,k-1}^{c};T_{U,k}^{c}\right)
}\left\{ f\left( s_{k}\right) +c/2\right\}  \\
& =M_{k}^{c}\left( s\right) -m_{k}^{c}-c.
\end{align*}
Summing up the above inequalities and using monotonicity of $g_{U}$\ we
finally get 
\begin{equation*}
g_{U}\left( s\right) \geq \sum_{i=0}^{k-1}\left\{
M_{i}^{c}-m_{i}^{c}-c\right\} +M_{k}^{c}\left( s\right)
-m_{k}^{c}-c=f_{U}^{c}\left( s\right) .
\end{equation*}

The proof of the corresponding inequality for $g_{D}$ follows similarly and
we get
\begin{equation*}
g_{D}\left( s\right) \geq \sum_{i=0}^{k-1}\left\{
M_{i}^{c}-m_{i+1}^{c}-c\right\} =f_{D}^{c}\left( s\right) .
\end{equation*}

\item  $s\in \left[ T_{D,k}^{c};T_{U,k+1}^{c}\right) $
The proof follows similarly as in the previous case.
\end{itemize}
\end{proof}

From Theorem \ref{thm1} we immediately get that the decomposition 
\begin{equation}
f^{c}\left( s\right) =f^{c}\left( a\right) +f_{U}^{c}\left( s\right)
-f_{D}^{c}\left( s\right)  \label{decomp}
\end{equation}
is minimal (cf. \cite{Revuz:1991kx}, page 5) thus the total variation of the
function $f^{c}$ on the interval $\left[ a;s\right] $ equals $%
f_{U}^{c}\left( s\right) +f_{D}^{c}\left( s\right) .$ 
\begin{rem}
\label{fUfDcadlag} From Lemma \ref{lem1} and the minimality of the decomposition
(\ref{decomp}) it follows that $f_{U}^{c}$ and $f_{U}^{c}$ are also c\`{a}dl\`{a}g 
functions and that every point of their discontinuity is also a point
of discontinuity of the function $f.$ Moreover, due to the minimality of the variation of the 
function $f^c,$ any jump of $f^c$ is no greater than the jump of the function $f.$
\end{rem}
We also have 
\begin{corollary}
\label{cor1} The function $f^{c}$ is optimal i.e. if $g:\left[ a;b\right]
\rightarrow \mathbb{R}$ is such that $\left\| f-g\right\| _{\infty }\leq 
c/2$ and has finite total variation, then for every $s\in \left[ a;b\right] $ 
\begin{equation*}
TV\left( g,\left[ a;s\right] \right) \geq TV\left( f^{c},\left[ a;s\right]
\right) .
\end{equation*}
Moreover, it is unique in such a sense that if for every $s\in \left[ a;b\right] $ 
the opposite inequality holds 
\begin{equation*}
TV\left( g,\left[ a;s\right] \right) \leq TV\left( f^{c},\left[ a;s\right]
\right) 
\end{equation*}
and $c\leq \sup_{s,u\in \lbrack a;b]}|f(s)-f(u)|$ then $g=f^{c}.$
\end{corollary}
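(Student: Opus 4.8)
The plan is to deduce both assertions directly from Theorem \ref{thm1} together with the minimality of the decomposition (\ref{decomp}). For the first (optimality) part: given $g$ with $\left\|f-g\right\|_\infty\leq c/2$ and finite total variation, write its minimal decomposition $g(t)=g(a)+g_U(t)-g_D(t)$ with $g_U,g_D$ non-decreasing. Theorem \ref{thm1} gives $g_U(s)\geq f_U^c(s)$ and $g_D(s)\geq f_D^c(s)$ for every $s$. Since $TV(g,[a;s])=g_U(s)+g_D(s)$ (minimality of the decomposition of $g$) and $TV(f^c,[a;s])=f_U^c(s)+f_D^c(s)$ (the remark following Theorem \ref{thm1}), adding the two inequalities yields $TV(g,[a;s])\geq TV(f^c,[a;s])$, which is exactly what is claimed.

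For the uniqueness part, suppose in addition that $TV(g,[a;s])\leq TV(f^c,[a;s])$ for all $s\in[a;b]$, so in fact $g_U(s)+g_D(s)=f_U^c(s)+f_D^c(s)$ for all $s$. Combined with the two coordinatewise inequalities $g_U\geq f_U^c$, $g_D\geq f_D^c$, this forces $g_U(s)=f_U^c(s)$ and $g_D(s)=f_D^c(s)$ for every $s$, hence $g(t)-g(a)=f^c(t)-f^c(a)$ for all $t$, i.e. $g$ and $f^c$ differ by the constant $g(a)-f^c(a)$. It remains to show this constant is $0$ under the hypothesis $c\leq\sup_{s,u\in[a;b]}|f(s)-f(u)|$. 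Here I would argue that the condition $\left\|f-g\right\|_\infty\leq c/2$ pins down the additive constant: since $g-f^c$ is constant and both functions lie within $c/2$ of $f$ in sup norm, we have $\left\|g-f^c\right\|_\infty\leq c$; the claim is that the only admissible value of the constant is $0$. The point is that $f^c$ already ``uses up'' the full width $c$ of the tube around $f$ — by construction, on some interval $f^c$ touches the lower rim $f-c/2$ (e.g. near a point realizing the infimum on $[a;T_{U,0}^c)$, or a point of $[T_{D,k-1}^c;T_{U,k}^c)$ where $f$ attains $m_k^c$) and on some interval it touches the upper rim $f+c/2$ (a point of $[T_{U,k}^c;T_{D,k}^c)$ where $f$ attains $M_k^c$); the hypothesis $c\leq\|f\|_{osc}$ is precisely what guarantees that at least one such ``upward'' and one such ``downward'' contact occurs, so that $\sup_s(f^c(s)-f(s))=c/2$ and $\inf_s(f^c(s)-f(s))=-c/2$. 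If $g=f^c+\delta$ with $\delta\neq 0$, then $\sup_s(g(s)-f(s))=c/2+\delta$ or $\inf_s(g(s)-f(s))=-c/2+\delta$ would exceed $c/2$ in absolute value, contradicting $\left\|f-g\right\|_\infty\leq c/2$. Hence $\delta=0$ and $g=f^c$.

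The main obstacle I anticipate is exactly this last step: making rigorous the claim that when $c\leq\sup_{s,u}|f(s)-f(u)|$ the function $f^c$ attains the extremal deviations $+c/2$ and $-c/2$ from $f$. One has to check the definition of the stopping times and of $f^c$ carefully to see that if $\sup_{s,u}|f(s)-f(u)|\geq c$ then $T_{U,0}^c<\infty$ (equivalently the sequence of stopping times is nontrivial), and then inspect the three branches of the formula for $f^c$ to locate one point where $f^c(s)=f(s)+c/2$ and one where $f^c(s)=f(s)-c/2$; when $c>\|f\|_{osc}$ this fails, $f^c$ is constant, and indeed any translate of it within the tube is also optimal, which is why the hypothesis cannot be dropped. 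Everything else is a routine bookkeeping consequence of Theorem \ref{thm1} and the minimality remark.
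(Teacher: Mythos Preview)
Your proposal is correct and follows essentially the same route as the paper: optimality from Theorem \ref{thm1} plus minimality of the two decompositions, then uniqueness by first forcing $g_U=f_U^c$, $g_D=f_D^c$ and hence $g=f^c+\delta$, and finally pinning down $\delta=0$ from the fact that $f^c-f$ already spans the full range $[-c/2,c/2]$. For that last step the paper is simply more concrete than your sketch: it observes that $c\leq\|f\|_{osc}$ forces $T_{U,0}^c\leq b$, then uses the single point $s=T_{U,0}^c$ (where $f^c(T_{U,0}^c)-f(T_{U,0}^c)=-c/2$ exactly) together with $\sup_{s\in[a;T_{U,0}^c)}(f^c(s)-f(s))=c/2$; note that in your description the ``upper rim''/``lower rim'' labels are swapped (at an infimum of $f$ on $[T_{D,k-1}^c;T_{U,k}^c)$ the function $f^c$ touches $f+c/2$, and at a maximum on $[T_{U,k}^c;T_{D,k}^c)$ it touches $f-c/2$), but your stated conclusion $\sup(f^c-f)=c/2$, $\inf(f^c-f)=-c/2$ is correct and the argument goes through.
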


\begin{proof}
Let $g_{U},g_{D}:\left[ a;b\right] \rightarrow \left[ 0;+\infty \right) $ be
two non-decreasing functions such that for $s\in \left[ a;b\right] ,$ 
$g\left( s\right) =g\left( a\right) +g_{U}\left( s\right) -g_{D}\left(
s\right) $ and $TV\left( g, \left[ a;s\right]\right)  =g_{U}\left( s\right)
+g_{D}\left( s\right) .$ 

The first assertion follows directly from Theorem \ref{thm1} and the fact that $TV\left( g, \left[ a;s\right]\right)  =g_{U}\left( s\right)
+g_{D}\left( s\right) .$ 

The opposite inequality, $TV\left( g,\left[ a;s\right] \right) \leq TV\left( f^{c},\left[ a;s\right]
\right),$ holds for every $s\in \left[ a;b\right] $\ iff 
$g_{U}\left( s\right) =f_{U}^{c}\left( s\right) $ and $g_{D}\left( s\right)
=f_{D}^{c}\left( s\right) .$ Thus in such a case we get $g\left( s\right)
-f^{c}\left( s\right) =g\left( a\right) -f^{c}\left( a\right) $ and we have 
\begin{eqnarray}
c/2 &\geq &\inf_{s\in \left[ a;T_{U,0}^{c}\right) }\left\{ g\left( s\right)
-f\left( s\right) \right\} =\inf_{s\in \left[ a;T_{U,0}^{c}\right) }\left\{
g\left( a\right) -f^{c}\left( a\right) +f^{c}\left( s\right) -f\left(
s\right) \right\}   \notag \\
&=&g\left( a\right) -f^{c}\left( a\right) +c/2  \label{g1}
\end{eqnarray}
(notice that  $T_{U,0}^{c} \leq b$ since $c \leq \sup_{s,u \in [a;b]} | f(s) - f(u) |$ and $T_{U,0}^{c} \leq T_{D,0}^{c}$).
On the other hand we have 
\begin{eqnarray}
-c/2 &\leq &g\left( T_{U,0}^{c}\right) -f\left( T_{U,0}^{c}\right) =g\left(
a\right) -f^{c}\left( a\right) +f^{c}\left( T_{U,0}^{c}\right) -f\left(
T_{U,0}^{c}\right)   \notag \\
&=&g\left( a\right) -f^{c}\left( a\right) -c/2.  \label{g2}
\end{eqnarray}
From (\ref{g1}) and (\ref{g2}) we get $g\left( a\right) =f^{c}\left(
a\right) .$ This together with the equalities $g_{U}\left( s\right)
=f_{U}^{c}\left( s\right) $ and $g_{D}\left( s\right) =f_{D}^{c}\left(
s\right) $ gives $g=f^{c}.$
\end{proof}
\begin{rem}
The formula obtained for the smallest possible total variation of a function
from the ball $\left\{ g:\left\| f-g\right\|_{\infty} \leq c/2\right\} $ reads as 
\begin{equation*}
f_{U}^{c}\left( b\right) +f_{D}^{c}\left( b\right)
\end{equation*}
and does not resemble formula (\ref{eq:tvequality}). In subsection \ref
{relationutvdtv} we will show that these formulae coincide.
\end{rem}

\subsection{Solution of the second problem}

In this subsection we will solve the following problem: \emph{\ for a
c\`{a}dl\`{a}g function $f:\left[ a;b\right] \rightarrow \mathbb{R}$ and $c>0
$ find 
\begin{equation*}
\inf \left\{ TV\left( f+h,\left[ a;b\right] \right) :\left\| h\right\|
_{osc}\leq c\right\} ,
\end{equation*}
where $h:\left[ a;b\right] \rightarrow \mathbb{R},$ $\left\| h\right\| _{osc}:=\sup_{s,u\in \left[ a;b\right] }\left|
h\left( s\right) -h\left( u\right) \right| .$}

We will show that 
\begin{equation*}
\inf \left\{ TV\left( f+h,\left[ a;b\right] \right) :\left\| h\right\|
_{osc}\leq c\right\} =f_{U}^{c}\left( b\right) +f_{D}^{c}\left( b\right) ,
\end{equation*}
where $f_{U}^{c}$ and $f_{D}^{c}$ were defined in the previous subsection.
In order to do it let us simply define 
\begin{equation*}
f^{0,c}=f_{U}^{c}-f_{D}^{c}.
\end{equation*}
We have

\begin{lem}
\label{lem2} The increments of the function $f^{0,c}$ uniformly approximate the
increments of the function $f$ with accuracy $c$ and the function $f^{0,c}$ has
finite total variation.
\end{lem}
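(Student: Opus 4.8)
The plan is to exploit the explicit formula $f^{0,c}=f_U^c-f_D^c$ together with the already-established facts about $f^c$. First observe that $f^{0,c}=f^c-f^c(a)$, since by construction $f^c(s)=f^c(a)+f_U^c(s)-f_D^c(s)$ (this is exactly the decomposition (\ref{decomp})). Hence $f^{0,c}$ differs from $f^c$ only by the constant $f^c(a)$, so $TV(f^{0,c},[a;b])=TV(f^c,[a;b])=f_U^c(b)+f_D^c(b)<\infty$ by Lemma \ref{lem1} and the minimality of the decomposition (\ref{decomp}); the finiteness part is therefore immediate.

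For the statement about increments, fix $s,u\in[a;b]$ with, say, $s<u$, and write
\[
\bigl(f^{0,c}(u)-f^{0,c}(s)\bigr)-\bigl(f(u)-f(s)\bigr)
 = \bigl(f^{c}(u)-f(u)\bigr)-\bigl(f^{c}(s)-f(s)\bigr).
\]
By Lemma \ref{lem1} we have $\|f^c-f\|_\infty\le c/2$, so each of the two bracketed terms on the right lies in $[-c/2;c/2]$, and their difference lies in $[-c;c]$. This shows
\[
\sup_{s,u\in[a;b]}\Bigl|\bigl(f^{0,c}(u)-f^{0,c}(s)\bigr)-\bigl(f(u)-f(s)\bigr)\Bigr|\le c,
\]
which is precisely the assertion that the increments of $f^{0,c}$ approximate the increments of $f$ with accuracy $c$. (Equivalently, setting $h^{0,c}=f^{0,c}-f$, one has $\|h^{0,c}\|_{osc}=\sup_{s,u}|h^{0,c}(s)-h^{0,c}(u)|=\sup_{s,u}|(f^c(s)-f(s))-(f^c(u)-f(u))|\le c$.)

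There is essentially no obstacle here; the lemma is a direct corollary of Lemma \ref{lem1} once one notices the identification $f^{0,c}=f^c-f^c(a)$. The only point requiring a word of care is the reduction to the case $T_D^cf\ge T_U^cf$: in the opposite case $f^c$ is defined as $-(-f)^c$, and one checks that the same constant-shift identity and the same bound $\|f^c-f\|_\infty\le c/2$ persist, so the argument goes through verbatim. I would state the lemma's proof in two short sentences: the finite-total-variation claim from $f^{0,c}=f^c-f^c(a)$ and Lemma \ref{lem1}, and the increment-approximation claim from the displayed telescoping identity above together with $\|f^c-f\|_\infty\le c/2$.
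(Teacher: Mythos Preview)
Your proof is correct and follows essentially the same route as the paper: both arguments rest on the observation that $f^{0,c}$ and $f^{c}$ differ by a constant (equivalently, $f^{0,c}=f^{c}-f^{c}(a)$), so finite total variation is inherited from Lemma~\ref{lem1}, and the increment bound follows from the identical telescoping identity together with $\|f^{c}-f\|_\infty\le c/2$. Your write-up is slightly more detailed (e.g.\ the remark on the case $T_D^c f<T_U^c f$), but the underlying idea is the same.
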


\begin{proof}
Since the difference $f^c - f^{0,c}$ is constant, the first and the second assertion follows immediately from 
Lemma \ref{lem1} and from simple calculation that for any $s,u \in [a;b],$
\begin{eqnarray*}
\lefteqn{ \left\{ f^{0,c}\left( s\right) -f^{0,c}\left( u\right) \right\}- \left\{ f\left( s\right) -f\left( u\right) \right\} }\\
& = &\left\{ f^{c}\left( s\right) -f\left( s\right) \right\} -\left\{
f^{c}\left( u\right) -f\left( u\right) \right\} \in [-c;c].
\end{eqnarray*}
\end{proof}

Now we will prove the analog of Theorem \ref{thm1}.

\begin{thm}
\label{thm2} If the increments of the function $g:\left[ a;b\right] \rightarrow 
\mathbb{R}$ uniformly approximate the increments of the function $f$ with
accuracy $c,$ $g$ has finite total variation and $g_{U},g_{D}:\left[
a;b\right] \rightarrow \left[ 0;+\infty \right) $ are such two
non-decreasing functions that $g\left( t\right) =g\left( a\right)
+g_{U}\left( t\right) -g_{D}\left( t\right) ,t\in \left[ a;b\right] ,$ then
for any $s\in \left[ a;b\right] $%
\begin{equation*}
g_{U}\left( s\right) \geq f_{U}^{c}\left( s\right) \text{ and }g_{D}\left(
s\right) \geq f_{D}^{c}\left( s\right) .
\end{equation*}
\end{thm}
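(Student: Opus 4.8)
The key observation is that Theorem \ref{thm2} is really a reduction to Theorem \ref{thm1}: if the increments of $g$ approximate the increments of $f$ with accuracy $c$, then a suitable vertical translate of $g$ approximates $f$ itself with accuracy $c/2$. First I would examine the quantity $d := \sup_{t\in[a;b]}\{g(t)-f(t)\}$ and $e := \inf_{t\in[a;b]}\{g(t)-f(t)\}$. The hypothesis $\|(g-g(a)) - (f - \text{const})\|$-type bound (more precisely, $\{g(s)-g(u)\}-\{f(s)-f(u)\}\in[-c;c]$ for all $s,u\in[a;b]$) is exactly the statement that $d - e \le c$. Hence the constant $\alpha := (d+e)/2$ satisfies $\sup_{t\in[a;b]}|g(t) - f(t) - \alpha| \le (d-e)/2 \le c/2$.

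Now set $\tilde g := g - \alpha$. Then $\tilde g$ uniformly approximates $f$ with accuracy $c/2$, has finite total variation (total variation is translation-invariant), and $\tilde g(t) = \tilde g(a) + g_U(t) - g_D(t)$ with the \emph{same} non-decreasing functions $g_U, g_D$, since a constant shift changes only $g(a)$ and not the increments. Applying Theorem \ref{thm1} to $\tilde g$ in place of $g$ gives, for every $s\in[a;b]$,
\begin{equation*}
g_U(s) \ge f_U^c(s) \quad\text{and}\quad g_D(s) \ge f_D^c(s),
\end{equation*}
which is precisely the assertion.

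The only point requiring a little care is verifying that the hypothesis "the increments of $g$ uniformly approximate the increments of $f$ with accuracy $c$" is equivalent to $d - e \le c$ with $d,e$ as above — this is the same elementary computation already displayed in the proof of Lemma \ref{lem2}, run in reverse: $\{g(s)-g(u)\} - \{f(s)-f(u)\} = \{g(s)-f(s)\} - \{g(u)-f(u)\}$, and the supremum of this difference over $s,u$ is exactly $d - e$, so the bound $[-c;c]$ on all such increments is equivalent to $d-e\le c$. I do not anticipate a genuine obstacle here; the content of the theorem is entirely carried by Theorem \ref{thm1}, and the present statement is obtained from it by the translation trick together with the observation that the decomposition data $(g_U,g_D)$ is insensitive to vertical translation.
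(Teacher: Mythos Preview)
Your proposal is correct and follows essentially the same approach as the paper: both proofs translate $g$ vertically by the midpoint of the range of $g-f$ so that the translate lies in the ball $\{\tilde g:\|f-\tilde g\|_\infty\le c/2\}$, observe that the decomposition functions $g_U,g_D$ are unchanged by a constant shift, and then invoke Theorem~\ref{thm1}. The only cosmetic difference is sign convention (the paper writes $\alpha=-\tfrac12(\inf h+\sup h)$ and adds it, you write $\alpha=\tfrac12(d+e)$ and subtract it), yielding the same translated function.
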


\begin{proof}
It is enough to see that for $h = g - f,$ $\left\| h\right\|
_{osc}\leq c,$ thus for $${\alpha} = - \frac{1}{2} \left\{ \inf_{s \in [a;b] }h(s) + \sup_{s \in [a;b] }h(s) \right\},$$ $\left\| {\alpha} + h\right\|
_{\infty}\leq \tfrac{1}{2}c,$ and the function $g_{{\alpha}} = {\alpha} + g$ belongs to the ball $\left\{ g:\left\| f-g\right\| _{\infty }\leq \tfrac{1}{2}c\right\}.$ Application of Theorem \ref{thm1} to the function $g_{{\alpha}}$ concludes the proof.

\end{proof}

Since the decomposition $f^{0,c}\left( s\right) =$ $f_{U}^{c}\left( s\right)
-f_{D}^{c}\left( s\right) $ is minimal and $f^{0,c}\left( a\right) = 0$ we
immediately obtain

\begin{corollary}
\label{cor2} The function $f^{0,c}$ is optimal i.e. if $g:\left[ a;b\right]
\rightarrow \mathbb{R}$ is such that 
\begin{equation*}
\sup_{a\leq u<s\leq b}\left| \left\{ g\left( s\right) -g\left( u\right)
\right\} -\left\{ f\left( s\right) -f\left( u\right) \right\} \right| \leq c
\end{equation*}
and $g$ has finite total variation, then for every $s\in \left[ a;b\right] $ 
\begin{equation*}
TV\left( g,\left[ a;s\right] \right) \geq TV\left( f^{0,c},\left[ a;s\right]
\right) .
\end{equation*}
Moreover, it is unique in such a sense that if $g\left( a\right) =0$ and\
for every $s\in \left[ a;b\right] $ the opposite inequality holds 
\begin{equation*}
TV\left( g,\left[ a;s\right] \right) \leq TV\left( f^{0,c},\left[ a;s\right]
\right) ,
\end{equation*}
then $g=f^{0,c}.$
\end{corollary}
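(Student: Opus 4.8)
The plan is to reduce Corollary \ref{cor2} to the already-established results about the first problem (Theorem \ref{thm1} and Corollary \ref{cor1}) in exactly the same way that Theorem \ref{thm2} was reduced to Theorem \ref{thm1}. For the optimality part, let $g:[a;b]\to\mathbb{R}$ have finite total variation and satisfy $\sup_{a\le u<s\le b}|\{g(s)-g(u)\}-\{f(s)-f(u)\}|\le c$, i.e. $\|h\|_{osc}\le c$ for $h=g-f$. Take $g_U,g_D$ to be the non-decreasing functions giving the minimal decomposition $g(s)=g(a)+g_U(s)-g_D(s)$ with $TV(g,[a;s])=g_U(s)+g_D(s)$ (this is the standard Jordan-type minimal decomposition, available since $g$ has finite variation). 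Theorem \ref{thm2} then gives $g_U(s)\ge f_U^c(s)$ and $g_D(s)\ge f_D^c(s)$ for every $s\in[a;b]$. Since $f^{0,c}=f_U^c-f_D^c$ and this decomposition is minimal with $f^{0,c}(a)=0$, we have $TV(f^{0,c},[a;s])=f_U^c(s)+f_D^c(s)$, hence
\begin{equation*}
TV(g,[a;s])=g_U(s)+g_D(s)\ge f_U^c(s)+f_D^c(s)=TV(f^{0,c},[a;s]),
\end{equation*}
which is the first assertion.

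For the uniqueness part, suppose in addition $g(a)=0$ and $TV(g,[a;s])\le TV(f^{0,c},[a;s])$ for every $s\in[a;b]$. Combining with the inequality just proved, $g_U(s)+g_D(s)=f_U^c(s)+f_D^c(s)$ for all $s$; together with $g_U(s)\ge f_U^c(s)$ and $g_D(s)\ge f_D^c(s)$ this forces $g_U(s)=f_U^c(s)$ and $g_D(s)=f_D^c(s)$ for every $s$. Therefore $g(s)=g(a)+g_U(s)-g_D(s)=0+f_U^c(s)-f_D^c(s)=f^{0,c}(s)$, using $g(a)=0=f^{0,c}(a)$. This completes the argument.

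The only point that requires a little care — and the natural candidate for the "main obstacle" — is the passage from "$g$ has finite total variation" to the existence of the minimal non-decreasing decomposition $g=g(a)+g_U-g_D$ with $TV(g,[a;s])=g_U(s)+g_D(s)$; but this is the classical fact (invoked earlier in the paper, cf. \cite{Revuz:1991kx}) that a finite-variation function admits such a Jordan decomposition, with $g_U$, $g_D$ the positive and negative variation functions. Once that is in hand the corollary is purely formal: the hypothesis on $g$ translates verbatim into $\|g-f\|_{osc}\le c$, Theorem \ref{thm2} applies directly, and the minimality of the decomposition of $f^{0,c}$ — already recorded just before the statement — does the rest. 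In particular, unlike Corollary \ref{cor1}, no extra hypothesis of the form $c\le\sup_{s,u}|f(s)-f(u)|$ is needed here, because the normalisation $g(a)=0=f^{0,c}(a)$ pins down the additive constant for free, whereas in Corollary \ref{cor1} that constant had to be recovered from the approximation condition via the stopping time $T_{U,0}^c$.
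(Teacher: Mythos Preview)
Your proof is correct and follows exactly the approach the paper has in mind: the paper states that Corollary~\ref{cor2} follows immediately from the minimality of the decomposition $f^{0,c}=f_U^c-f_D^c$ together with $f^{0,c}(a)=0$, and your argument spells out precisely this reduction via Theorem~\ref{thm2} and the minimal Jordan decomposition of $g$, mirroring the proof of Corollary~\ref{cor1}. Your closing observation about why no hypothesis $c\le\sup_{s,u}|f(s)-f(u)|$ is needed here is also correct and worth noting.
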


From Corollary \ref{cor2} it immediately follows that 
\begin{equation*}
\inf \left\{ TV\left( f+h,\left[ a;b\right] \right) :\left\| h\right\|
_{osc}\leq c\right\} =f_{U}^{c}\left( b\right) +f_{D}^{c}\left( b\right).
\end{equation*}
Indeed, for any $h$ such that $\left\| h\right\| _{osc} \leq c$ we put $g =
f + h$ and if $g$ has finite total variation then it satisfies the
assumptions of Corollary \ref{cor2} and we get 
\begin{equation*}
TV\left( g,\left[ a;b\right] \right) \geq TV\left( f^{0,c},\left[ a;b\right]
\right) = f_{U}^{c}\left( b\right) +f_{D}^{c}\left( b\right) .
\end{equation*}

\section{Relation of the solutions of the first and the second problem
with truncated variation, upward truncated variation and downward truncated
variation}
\label{relationutvdtv}

In order to prove (\ref{eq:tvequality}), (\ref{tvceq}) and (\ref{sumutvdtv}), where $UTV^{c}\left( f,\left[ a;s\right] \right) $ and $DTV^{c}\left( f,\left[ a;s \right] \right) $ are defined by (\ref{utv:def}) and (\ref{dtv:def})
respectively,\ it is enough to prove

\begin{thm}
\label{THMM} For a given c\`{a}dl\`{a}g function $f:\left[ a;b\right]
\rightarrow \mathbb{R}$ and for any $s\in \left( a;b\right] $\ the following
equalities hold 
\begin{gather}
UTV^{c}\left( f,\left[ a;s\right] \right) =f_{U}^{c}\left( s\right) ,
\label{UTVfU} \\
DTV^{c}\left( f,\left[ a;s\right] \right) =f_{D}^{c}\left( s\right) ,
\label{DTVfD} \\
TV^{c}\left( f,\left[ a;s\right] \right) =f_{U}^{c}\left( s\right)
+f_{D}^{c}\left( s\right) .  \label{eq3}
\end{gather}
\end{thm}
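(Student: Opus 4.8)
The plan is to prove the three equalities by establishing both inequalities for each, exploiting the structure already built up in Sections 2–3. The ``$\geq$'' directions are the easy halves and follow from the optimality results already proven. For \eqref{UTVfU}: fix $s\in(a;b]$ and an arbitrary partition $a\le t_0<t_1<\dots<t_n\le s$. Applying the lower bound idea behind \eqref{eq:lower_bound} but tracking only the upward increments, one sees that any $g$ with $\|g-f\|_\infty\le c/2$ and finite total variation, written as $g=g(a)+g_U-g_D$ with $g_U,g_D$ non-decreasing, satisfies $g_U(s)\ge \sum_{i=1}^n\max\{f(t_i)-f(t_{i-1})-c,0\}$; taking the supremum over partitions and then applying this to $g=f^c$ (using $f_U^c=g_U$ by minimality of the decomposition \eqref{decomp}) gives $f_U^c(s)\ge UTV^c(f,[a;s])$. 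Symmetrically for \eqref{DTVfD}. Adding these two and recalling $TV(f^c,[a;s])=f_U^c(s)+f_D^c(s)$, together with the trivial inequality $TV^c(f,[a;s])\ge UTV^c+DTV^c$ contributed termwise, will handle part of \eqref{eq3}; but in fact the cleaner route for \eqref{eq3} is to note $\max\{|x|-c,0\}=\max\{x-c,0\}+\max\{-x-c,0\}$ for every real $x$, so $TV^c(f,[a;s])$ splits \emph{termwise} into the upward and downward parts, giving $TV^c(f,[a;s])\le UTV^c(f,[a;s])+DTV^c(f,[a;s])$ directly (and the reverse is immediate since each summand of $UTV^c$ or $DTV^c$ is dominated by the corresponding $\max\{|\cdot|-c,0\}$ term — careful, here one must combine an upward partition and a downward partition into a common refinement, which works because the $\max\{\cdot,0\}$ terms are non-negative). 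So \eqref{eq3} reduces to \eqref{UTVfU} and \eqref{DTVfD}.

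Thus the real work is the ``$\le$'' directions: $f_U^c(s)\le UTV^c(f,[a;s])$ and $f_D^c(s)\le DTV^c(f,[a;s])$. Here I would exhibit, for each $s$, an explicit partition (or a sequence of partitions) of $[a;s]$ realizing $f_U^c(s)$ up to $\varepsilon$. The natural candidate is read off from the block structure: on $[a;T_{U,0}^c)$ there is no upward contribution; on each ``up-run'' $[T_{U,k}^c;T_{D,k}^c)$ the function $f_U^c$ increases by $M_k^c-m_k^c-c$ (or $M_k^c(s)-m_k^c-c$ for the last, incomplete block), and between runs it is flat. So pick points: a point $\tau_k^-\in[T_{D,k-1}^c;T_{U,k}^c)$ with $f(\tau_k^-)<m_k^c+\varepsilon$, and a point $\tau_k^+\in[T_{U,k}^c;T_{D,k}^c)$ (or $[T_{U,k}^c;s]$ for the terminal block) with $f(\tau_k^+)>M_k^c-\varepsilon$ (resp. $>M_k^c(s)-\varepsilon$); these exist by definition of $m_k^c$ as an infimum and $M_k^c$ as a supremum. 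Ordering all these points gives a partition whose consecutive-increment sum of $\max\{f(t_i)-f(t_{i-1})-c,0\}$ picks up, from each pair $(\tau_k^-,\tau_k^+)$, at least $M_k^c-m_k^c-c-2\varepsilon$, while the ``return'' increments $(\tau_k^+,\tau_{k+1}^-)$ are downward (since $f(\tau_{k+1}^-)\approx m_{k+1}^c\le M_k^c$, as the next infimum is taken starting from $T_{D,k}^c$ and $f$ drops by at least $c$ there) and contribute $0$ to the $\max\{\cdot-c,0\}$ sum. Summing over $k$ from $0$ up to the block containing $s$ and letting $\varepsilon\downarrow0$ yields $UTV^c(f,[a;s])\ge f_U^c(s)$. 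The downward case is the mirror image.

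The main obstacle is the bookkeeping at the boundaries between blocks and at the terminal point $s$: one must check that the chosen points are correctly ordered (e.g. $\tau_k^+<T_{D,k}^c\le T_{U,k+1}^c$, and $\tau_{k+1}^-$ lies in the correct half-open interval), that the ``return'' increments really are non-positive — this uses the definition of the stopping times, namely that at $T_{D,k}^c$ the running supremum over $[T_{U,k}^c;\cdot]$ exceeds the current value by $\ge c$, forcing $m_{k+1}^c\le M_k^c$ — and that when $s$ falls in an up-run versus a down-run the terminal term is $M_k^c(s)-m_k^c-c$ versus the full $\sum$ up to $T_{D,k}^c$, matching the piecewise definition of $f_U^c$. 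One also has to treat carefully the degenerate cases where a block is empty or $T_{U,k}^c=T_{D,k}^c$ (a jump of size $\ge 2c$ at a single point), where the ``partition'' collapses to using $T_{U,k}^c-$ type left-limit points; right-continuity of $f$ and the left-limit structure (already invoked in Lemma \ref{lem1} and Remark \ref{finK}) let one approximate these by genuine interior points. Once these combinatorial checks are in place, the inequalities close up and \eqref{UTVfU}, \eqref{DTVfD}, \eqref{eq3} all follow.
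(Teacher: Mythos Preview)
Your overall strategy is sound and, interestingly, differs from the paper's. The paper proves \eqref{UTVfU} by importing a recursion lemma from \cite{Lochowski:2011} (giving $UTV^c(f,[a;s])=\sup_{a\le t<u\le T_D^c f\wedge s}(f(u)-f(t)-c)_+ + UTV^c(f,[T_D^c f\wedge s;s])$) and then iterating it block by block; you instead argue both inequalities directly. Your ``easy'' direction $f_U^c(s)\ge UTV^c(f,[a;s])$ via the observation that for any partition $g_U(t_i)-g_U(t_{i-1})\ge\max\{f(t_i)-f(t_{i-1})-c,0\}$ (applied to $g=f^c$) is clean and self-contained, and your explicit alternating partition $\tau_0^-,\tau_0^+,\tau_1^-,\tau_1^+,\dots$ for the reverse inequality is exactly the construction the paper uses---though the paper only writes it out for the $TV^c$ case. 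So your route avoids the external citation at the cost of slightly more explicit bookkeeping, which is a fair trade.

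There is, however, a genuine gap in your handling of \eqref{eq3}. The termwise identity $\max\{|x|-c,0\}=\max\{x-c,0\}+\max\{-x-c,0\}$ correctly gives $TV^c\le UTV^c+DTV^c$ after taking suprema. But your ``reverse'' via common refinement fails: the partial sums $\sum_\pi\max\{f(t_i)-f(t_{i-1})-c,0\}$ are \emph{not} monotone under refinement (e.g.\ with $c>0$, $f(t_{i-1})=0$, $f(t')=c/2$, $f(t_i)=2c$, the unrefined contribution is $c$ while the refined one is $c/2$). Non-negativity of the summands does not save this. So you cannot pass from separately near-optimal partitions for $UTV^c$ and $DTV^c$ to a single partition witnessing their sum for $TV^c$.

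The fix is already implicit in your own construction: the alternating partition $\tau_0^-,\tau_0^+,\tau_1^-,\tau_1^+,\dots$ that you build to show $UTV^c\ge f_U^c$ also witnesses $TV^c\ge f_U^c+f_D^c$ directly, because the ``return'' increments $(\tau_k^+,\tau_{k+1}^-)$ contribute $|f(\tau_{k+1}^-)-f(\tau_k^+)|-c\approx M_k^c-m_{k+1}^c-c\ge 0$ to the $TV^c$-sum (you already verified $m_{k+1}^c\le M_k^c-c$). Summing both the up-pairs and the return-pairs over $k$ recovers $f_U^c(s)+f_D^c(s)$ up to $\varepsilon$. This is precisely what the paper does for \eqref{eq3}; combine it with your termwise inequality $TV^c\le UTV^c+DTV^c$ and with \eqref{UTVfU}, \eqref{DTVfD}, and the argument closes.
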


\begin{proof}
Examining (with obvious modifications) the proof of Lemma 3 from \cite{Lochowski:2011}, we see that it may be applied to the c\`{a}dl\`{a}g (but not necessarily continuous)
function $f$\ and we obtain 
\begin{equation}
UTV^{c}\left( f, \left[ a;s\right] \right) =\sup_{a\leq t<u\leq \left(
T_{D}^{c}f\right) \wedge s}\left( f\left( u\right) -f\left( t\right)
-c\right) _{+}+UTV^{c}\left( f, \left[ \left( T_{D}^{c}f\right) \wedge
s;s\right] \right) .  \label{utvf}
\end{equation}
Now, from the assumption $T_{D}^{c}f\geq T_{U}^{c}f$ we get 
$T_{D}^{c}f=T_{D,0}^{c}$ and we have that 
\begin{equation*}
\sup_{a\leq t<u\leq \left( T_{D}^{c}f\right) \wedge s}\left( f\left(
u\right) -f\left( t\right) -c\right) _{+}=\left\{ 
\begin{array}{lr}
0 & \text{ if }s\in \left[ a;T_{U,0}^{c}\right) ; \\ 
M_{0}^{c}\left( s\right) -m_{0}^{c}-c & \text{ if }s\in \left[
T_{U,0}^{c};T_{D,0}^{c}\right) ; \\ 
M_{0}^{c}-m_{0}^{c}-c & \text{ if }s\geq T_{D,0}^{c}.
\end{array}
\right. 
\end{equation*}
Iterating the equality (\ref{utvf}) we obtain 
\begin{eqnarray*}
UTV^{c}\left( f, \left[ a;s\right]  \right) &=&\left\{ 
\begin{array}{lr}
0 & \text{ if }s\in \left[ a;T_{U,0}^{c}\right) ; \\ 
\sum_{i=0}^{k-1}\left( M_{i}^{c}-m_{i}^{c}-c\right) +M_{k}^{c}\left(
s\right) -m_{k}^{c}-c & \text{ if }s\in \left[ T_{U,k}^{c};T_{D,k}^{c}\right) ;
\\ 
\sum_{i=0}^{k}\left( M_{i}^{c}-m_{i}^{c}-c\right) & \text{ if }s\in \left[
T_{D,k}^{c};T_{U,k+1}^{c}\right) 
\end{array}
\right.  \\
&=&f_{U}^{c}\left( s\right) .
\end{eqnarray*}
\begin{rem}
Iterating (\ref{utvf}) we obtain a bit different equality than $UTV^{c}\left( f, \left[ a;s\right]  \right) = f_{U}^{c} $ , but equivalent with it. To see this let us define the following sequence of times.  $\tilde{T}_{D,-1}^{c}=a,$ and for $
k=0,1,2,...$%
\begin{equation*}
\tilde{T}_{D,k+1}^{c}=\inf \left\{ s>\tilde{T}_{D,k}^{c}:\sup_{t\in \left[ 
\tilde{T}_{D,k}^{c};s\right] }f\left( t\right) -f\left( s\right) \geq c\right\} .
\end{equation*}
Let us fix $s_{0} \in [a;b]$ and define $k_{0}=\max \left\{ k:\tilde{T}%
_{D,k}^{c}\leq s_{0}\right\} .$ Iterating (\ref{utvf}) we obtain the following equality 
\begin{equation*}
UTV^{c}\left( f,  \left[ a;s_{0}\right]\right) =\sum_{k=1}^{k_{0}-1}\sup_{%
\tilde{T}_{D,k}^{c}\leq s<u\leq \tilde{T}_{D,k+1}^{c}}\left( f\left(
u\right) -f\left( s\right) -c\right) _{+}+UTV^{c}\left( f, \left[ 
\tilde{T}_{D,k_{0}}^{c};s_{0}\right]\right)  
\end{equation*}
which looks different from $f_{U}^{c}\left( s_0\right) .$ But it
is easy to notice that for all $k\geq 1$ such that $\tilde{T}%
_{D,k+1}^{c}<T_{U,1}^{c}f$ the summand $\sup_{\tilde{T}_{D,k}^{c}\leq s<u\leq 
\tilde{T}_{D,k+1}^{c}}\left( f\left( u\right) -f\left( s\right) -c\right)
_{+}$ is equal zero. Thus in fact both quantities coincide.
\end{rem}

Identically we prove that $DTV^{c}\left( f\right) \left[ a;s\right]
=f_{D}^{c}\left( s\right) .$

Now, in order to prove the equality (\ref{eq3}) simply notice that 
$TV^{c}\left( f, \left[ a;s\right]\right)  \geq 0$\ and if $s\in \left[
T_{U,k}^{c};T_{D,k}^{c}\right) $ 
\begin{eqnarray*}
TV^{c}\left( f, \left[ a;s\right]\right)   &\geq &\sum_{i=0}^{k-1}\left(
M_{i}^{c}-m_{i}^{c}-c\right) +\sum_{i=0}^{k-1}\left(
M_{i}^{c}-m_{i+1}^{c}-c\right) +M_{k}^{c}\left( s\right) -m_{k}^{c}-c \\
&=&f_{U}^{c}\left( s\right) +f_{D}^{c}\left( s\right) .
\end{eqnarray*}
Analogously, if $s\in \left[ T_{D,k}^{c};T_{U,k+1}^{c}\right) $ 
\begin{eqnarray*}
TV^{c}\left( f, \left[ a;s\right]\right)   &\geq &\sum_{i=0}^{k-1}\left(
M_{i}^{c}-m_{i}^{c}-c\right) +\sum_{i=0}^{k-1}\left(
M_{i}^{c}-m_{i+1}^{c}-c\right) +M_{k}^{c} -m_{k+1}^{c}\left(
s\right) -c \\
&=&f_{U}^{c}\left( s\right) +f_{D}^{c}\left( s\right) .
\end{eqnarray*}
Hence for all $s\in \left[ a;b\right] $ 
\begin{equation*}
TV^{c}\left( f, \left[ a;s\right]\right)  \geq f_{U}^{c}\left( s\right)
+f_{D}^{c}\left( s\right) .
\end{equation*}

So 
\begin{equation*}
TV^{c}\left( f, \left[ a;s\right]\right)  \geq UTV^{c}\left( f, \left[
a;s\right]\right)  +DTV^{c}\left( f, \left[ a;s\right]\right)  .
\end{equation*}
Since the opposite inequality is obvious, we finally get (\ref{eq3}).
\end{proof}

Now we see that by Corollary \ref{cor1} and Corollary \ref{cor2} functions $%
h^{c}=f^{c}-f$ and $h^{0,c}=f(a)+f^{0,c}-f= f(a) + UTV^{c}\left( f,\left[ a;.%
\right] \right) -DTV^{c}\left( f,\left[ a;.\right] \right) -f$ are optimal
and such that for any $s\in \left( a;b\right] $%
\begin{eqnarray*}
\inf \left\{ TV\left( f+h,\left[ a;s\right] \right) :\left\| h\right\|
_{\infty }\leq c/2\right\} &=&TV\left( f+h^{c},\left[ a;s\right]
\right) \\
&=&TV^{c}\left( f,\left[ a;s\right] \right),
\end{eqnarray*}
\begin{eqnarray*}
\inf \left\{ TV\left( f+h,\left[ a;s\right] \right) :\left\| h\right\|
_{osc}\leq c\right\} &=&TV\left( f+h^{0,c},\left[ a;s\right] \right) \\
&=&TV^{c}\left( f,\left[ a;s\right] \right) .
\end{eqnarray*}
Moreover, by Remark \ref{fUfDcadlag}, $h^{c}$ and $h^{0,c}$ are also
c\`{a}dl\`{a}g functions and every point of their discontinuity is also a
point of discontinuity of the function $f.$

\section{Further properties of truncated variation, upward truncated
variation and downward truncated variation}

In this section we summarize basic properties of the defined functionals.
We start with

\subsection{Algebraic properties.}

For any $c>0$ we have 
\begin{gather}
DTV^{c}\left( f,\left[ a;b\right] \right) =UTV^{c}\left( -f,\left[ a;b\right]
\right) ,  \label{A1} \\
TV^{c}\left( f,\left[ a;b\right] \right) =UTV^{c}\left( f,\left[ a;b\right]
\right) +DTV^{c}\left( f,\left[ a;b\right] \right) .  \label{A2}
\end{gather}
Property (\ref{A1}) follows simply from the definitions (\ref{utv:def}) and (%
\ref{dtv:def}). Property (\ref{A2}) is the consequence of Theorem \ref{THMM}.

\subsection{Properties of $UTV^{c}\left( f,\left[ a;b\right] \right)
,DTV^{c}\left( f,\left[ a;b\right] \right) $ and $TV^{c}\left( f,\left[ a;b%
\right] \right) $ as the functions of the parameter $c$}

We have the following

\begin{fact}
For any c\`{a}dl\`{a}g function $f$ the functions $\left( 0;\infty \right)
\ni c\mapsto UTV^{c}\left( f,\left[ a;b\right] \right) \in \left[ 0;+\infty
\right) ,$ $\left( 0;\infty \right) \ni c\mapsto DTV^{c}\left( f,\left[ a;b%
\right] \right) \in \left[ 0;+\infty \right) $ and $\left( 0;\infty \right)
\ni c\mapsto TV^{c}\left( f,\left[ a;b\right] \right) \in \left[ 0;+\infty
\right) $ are non-increasing, continuous, convex functions of the parameter 
$c.$ Moreover, $\lim_{c\downarrow 0}TV^{c}\left( f,\left[ a;b\right] \right)
=TV\left( f,\left[ a;b\right] \right) $ and for any $c\geq \left\| f\right\|
_{osc},$ $TV^{c}\left( f,\left[ a;b\right] \right) =0.$
\end{fact}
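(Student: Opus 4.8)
The plan is to treat the three functionals separately but handle the monotonicity and convexity simultaneously, since by property (\ref{A2}) it suffices to establish everything for $UTV^{c}$ and $DTV^{c}$ and then add; and by (\ref{A1}) any statement about $DTV^{c}$ follows from the corresponding one for $UTV^{c}$ applied to $-f$. So the real work is confined to $c\mapsto UTV^{c}\left(f,[a;b]\right)$.

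For monotonicity and convexity I would argue directly from the supremum formula (\ref{utv:def}). Fix a partition $a\le t_{0}<\dots<t_{n}\le b$ and set $x_{i}=f(t_{i})-f(t_{i-1})$. The partial sum is $\sum_{i}\max\{x_{i}-c,0\}=\sum_{i}(x_{i}-c)_{+}$. For each fixed real $x$, the map $c\mapsto (x-c)_{+}$ is nonincreasing and convex on $(0;\infty)$ (it is the positive part of an affine function). Hence $c\mapsto\sum_{i}(x_{i}-c)_{+}$ is nonincreasing and convex for every partition, and a supremum of nonincreasing convex functions is nonincreasing and convex; this gives both properties for $UTV^{c}$, then for $DTV^{c}$ and (by summation) for $TV^{c}$. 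Continuity on the open interval $(0;\infty)$ is then automatic, since a finite convex function on an open interval is continuous; the finiteness is guaranteed by Fact \ref{cadlag}.

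For the limit as $c\downarrow0$, I would use that $c\mapsto TV^{c}\left(f,[a;b]\right)$ is nonincreasing, so $\lim_{c\downarrow0}TV^{c}\left(f,[a;b]\right)$ exists in $[0;+\infty]$ and is bounded above by $TV\left(f,[a;b]\right)$ since $(x_{i}-c)_{+}+(-x_{i}-c)_{+}\le |x_{i}|$ for each $i$ shows $TV^{c}\le TV$ for every $c>0$. For the reverse inequality, fix any partition $\pi_{n}$; then $\sum_{i}\bigl(|x_{i}|-2c\bigr)\le\sum_{i}\bigl[(x_{i}-c)_{+}+(-x_{i}-c)_{+}\bigr]\le TV^{c}\left(f,[a;b]\right)$ (using $|x|-2c\le (x-c)_{+}+(-x-c)_{+}$, which one checks by cases), so $\sum_{i}|x_{i}|-2nc\le TV^{c}$; letting $c\downarrow0$ with $\pi_{n}$ fixed gives $\sum_{i}|x_{i}|\le\lim_{c\downarrow0}TV^{c}\left(f,[a;b]\right)$, and taking the supremum over partitions yields $TV\left(f,[a;b]\right)\le\lim_{c\downarrow0}TV^{c}\left(f,[a;b]\right)$. (If $TV\left(f,[a;b]\right)=+\infty$ the same computation shows the limit is $+\infty$.) Finally, for $c\ge\|f\|_{osc}$ we have $|x_{i}|=|f(t_{i})-f(t_{i-1})|\le\|f\|_{osc}\le c$ for every $i$ and every partition, so each summand $(\pm x_{i}-c)_{+}$ vanishes and hence $UTV^{c}=DTV^{c}=TV^{c}=0$.

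The only mildly delicate point is the interchange of the limit $c\downarrow0$ with the supremum over partitions; I expect this to be the main (though still routine) obstacle, and it is handled exactly as above by doing the limit for a fixed partition first and only then taking the supremum — monotonicity in $c$ makes this legitimate. Everything else reduces to the elementary observation that positive parts of affine functions are convex and nonincreasing in the shift parameter, a property preserved under suprema.
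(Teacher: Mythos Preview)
Your argument is correct and follows essentially the same route as the paper: reduce to $UTV^{c}$ via (\ref{A1}) and (\ref{A2}), establish monotonicity and convexity directly from the supremum formula (\ref{utv:def}), and deduce continuity from convexity together with the finiteness supplied by Fact~\ref{cadlag}. Your packaging of convexity as ``a supremum of convex functions is convex'' is a slightly cleaner version of the paper's $\varepsilon$-approximation, and you in fact go further than the paper's own proof by supplying an argument for $\lim_{c\downarrow 0}TV^{c}(f,[a;b])=TV(f,[a;b])$, which the paper asserts but does not prove.
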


\begin{proof}
The finiteness of $TV,$ $UTV$ and $DTV$ follows from Lemma \ref{lem1} and Theorem \ref{THMM}. Monotonicity is obvious. 

We start with the proof of the convexity. Let us fix $c,\varepsilon >0$\ and
consider such a partition $a\leq t_{0}<t_{1}<...<t_{n}\leq b$ of the
interval $\left[ a;b\right] $ that 
\begin{equation*}
UTV^{c}\left( f,\left[ a;b\right] \right) \leq \sum_{i=0}^{n-1}\max \left\{
f\left( t_{i+1}\right) -f\left( t_{i}\right) -c,0\right\} +\varepsilon .
\end{equation*}
Taking $\alpha \in \left[ 0;1\right] $ and $c_{1},c_{2}>0$ such that $%
c=\alpha c_{1}+\left( 1-\alpha \right) c_{2}$\ we have the inequality 
\begin{multline*}
\max \left\{ f\left( t_{i+1}\right) -f\left( t_{i}\right) -\alpha
c_{1}-\left( 1-\alpha \right) c_{2},0\right\}  \\
=\max \left\{ \alpha \left( f\left( t_{i+1}\right) -f\left( t_{i}\right)
-c_{1}\right) +\left( 1-\alpha \right) \left( f\left( t_{i+1}\right)
-f\left( t_{i}\right) -c_{2}\right) ,0\right\}  \\
\leq \alpha \max \left\{ f\left( t_{i+1}\right) -f\left( t_{i}\right)
-c_{1},0\right\} +\left( 1-\alpha \right) \max \left\{ f\left(
t_{i+1}\right) -f\left( t_{i}\right) -c_{2},0\right\} .
\end{multline*}
Now 
\begin{eqnarray*}
UTV^{c}\left( f,\left[ a;b\right] \right)  &\leq &\sum_{i=0}^{n-1}\max
\left\{ f\left( t_{i+1}\right) -f\left( t_{i}\right) -c,0\right\}
+\varepsilon  \\
&\leq &\alpha \sum_{i=0}^{n-1}\max \left\{ f\left( t_{i+1}\right) -f\left(
t_{i}\right) -c_{1},0\right\}  \\
&&+\left( 1-\alpha \right) \sum_{i=0}^{n-1}\max \left\{ f\left(
t_{i+1}\right) -f\left( t_{i}\right) -c_{2},0\right\} +\varepsilon  \\
&\leq &\alpha UTV^{c_{1}}\left( f,\left[ a;b\right] \right) +\left( 1-\alpha
\right) UTV^{c_{2}}\left( f,\left[ a;b\right] \right) +\varepsilon .
\end{eqnarray*}
Since $\varepsilon $ may be arbitrary small, we obtain the convexity
assertion. From convexity and monotonicity we obtain the continuity
assertion.

The same properties of $DTV$ and $TV$ follow immediately from (\ref{A1}) and
(\ref{A2}).

The fact that for $c\geq \left\| f\right\| _{osc},$ $TV^{c}\left( f,\left[ a;b\right] \right) =0$ 
follows easily from equality 
$$\max \left\{ \left| f\left( t_{i+1}\right) -f\left( t_{i}\right) \right| -c,0\right\} =0 $$
satisfied for any such $c$ and $t_{i}, t_{i+1} \in [a;b].$
\end{proof}

\begin{rem}
\cite[Theorem 17]{Vladimirov:2000} gives some estimates for the rate of the convergence of $TV^c\left( f,\left[ a;b\right] \right)$ to $+\infty$ when $c\downarrow 0$ and $f$ has finite $p$-variation with $p > 1$. 
\end{rem}

\section*{Acknowledgements}
The author would like to express his gratitude to Prof. Przemys\l aw
Woj-taszczyk from Warsaw University for very helpful conversations which
facilitated the finding of the solutions of the two problems solved in Section \ref{construction} and to Prof. Rimas Norvai\v{s}a from Vilnius University for
pointing out to him the notion of regulated functions. 

This research was partly supported by the National Science Centre in Poland under the decision no. DEC-2011/01/B/ST1/05089.

\end{document}